\documentclass[a4paper]{amsart}
\usepackage[utf8]{inputenc}
\pagestyle{headings}
\usepackage{amssymb,amsmath,enumerate,amsthm,url}
\usepackage[all,cmtip]{xy}
\usepackage{graphicx}
\usepackage[numbers]{natbib}
\usepackage{bibentry}
\usepackage{hyperref}

\bibliographystyle{alpha}
\theoremstyle{plain}
\newtheorem{theorem}{Theorem}[section]
\newtheorem{lemma}[theorem]{Lemma}

\newtheorem{fact}[theorem]{Fact}
\newtheorem*{fact*}{Fact}
\newtheorem{corollary}[theorem]{Corollary}

\newtheorem*{claim*}{Claim}
\theoremstyle{definition}
\newtheorem{definition}[theorem]{Definition}
\newtheorem*{definition*}{Definition}

\newtheorem*{notation*}{Notation}
\theoremstyle{remark}
\newtheorem{remark}[theorem]{Remark}
\newtheorem*{remark*}{Remark}

\newtheorem*{example*}{Example}

\newtheorem*{note*}{Note}

\newtheorem*{question*}{Question}
\begin{document}
\newcommand{\Fix}{{\operatorname{Fix}}}
\newcommand{\Aut}{{\operatorname{Aut}}}
\newcommand{\Bir}{{\operatorname{Bir}}}
\newcommand{\tp}{{\operatorname{tp}}}
\newcommand{\acl}{{\operatorname{acl}}}
\newcommand{\dcl}{{\operatorname{dcl}}}
\newcommand{\ACF}{{\operatorname{ACF}}}
\newcommand{\AGL}{{\operatorname{AGL}}}
\newcommand{\GL}{{\operatorname{GL}}}
\newcommand{\id}{{\operatorname{id}}}
\newcommand{\trd}{{\operatorname{trd}}}
\newcommand{\locus}{{\operatorname{locus}}}
\renewcommand{\P}{{\mathbb{P}}}
\newcommand{\A}{{\mathbb{A}}}
\newcommand{\Q}{{\mathbb{Q}}}
\newcommand{\G}{{\mathbb{G}}}
\newcommand{\C}{{\mathbb{C}}}
\newcommand{\N}{{\mathbb{N}}}
\newcommand{\U}{{\mathcal{U}}}
\renewcommand{\a}{{\overline{a}}}
\providecommand{\defn}[1]{{\bf #1}}
\providecommand{\bdl}{\boldsymbol\delta}

\providecommand{\dind}{\ind^{\bdl}}
\providecommand{\ndind}{\nind^{\bdl}}

\providecommand{\gfrac}[2]{#2^{-1}\cdot #1}

\def\Ind#1#2{#1\setbox0=\hbox{$#1x$}\kern\wd0\hbox to 0pt{\hss$#1\mid$\hss}
\lower.9\ht0\hbox to 0pt{\hss$#1\smile$\hss}\kern\wd0}
\def\ind{\mathop{\mathpalette\Ind\emptyset }}
\def\notind#1#2{#1\setbox0=\hbox{$#1x$}\kern\wd0\hbox to 0pt{\mathchardef
\nn=12854\hss$#1\nn$\kern1.4\wd0\hss}\hbox to
0pt{\hss$#1\mid$\hss}\lower.9\ht0 \hbox to
0pt{\hss$#1\smile$\hss}\kern\wd0}
\def\nind{\mathop{\mathpalette\notind\emptyset }}

\newbox\gnBoxA
\newdimen\gnCornerHgt
\setbox\gnBoxA=\hbox{$\ulcorner$}
\global\gnCornerHgt=\ht\gnBoxA
\newdimen\gnArgHgt
\def\code #1{%
\setbox\gnBoxA=\hbox{$#1$}%
\gnArgHgt=\ht\gnBoxA%
\ifnum     \gnArgHgt<\gnCornerHgt \gnArgHgt=0pt%
\else \advance \gnArgHgt by -\gnCornerHgt%
\fi \raise\gnArgHgt\hbox{$\ulcorner$} \box\gnBoxA %
\raise\gnArgHgt\hbox{$\urcorner$}}

\title{Non-expansion in polynomial automorphisms of $\C^2$}
\author{Martin Bays and Tingxiang Zou}
\address{Martin Bays, Mathematical Institute, University of Oxford, Andrew Wiles Building, Radcliffe Observatory Quarter, Woodstock Road, Oxford OX2 6GG, UK}
\email{mbays@sdf.org}
\address{Tingxiang Zou, Mathematical Institute, University of Bonn, Endenicher Allee 60, 53115 Bonn, Germany}
\email{tzou@math.uni-bonn.de}

\maketitle

\begin{abstract}
    We treat the higher-dimensional Elekes-Szabó problem in the case of the action of $\Aut(\C^2)$ on $\C^2$.
\end{abstract}

\section{Introduction}
Consider the group $\Aut(\C^2)$ of polynomial automorphisms of the complex plane. An element of $\Aut(\C^2)$ can be seen as a pair of polynomials in two variables $g_x,g_y \in \C[x,y]$, acting on $\C^2$ by $(g_x,g_y)*(x,y) := (g_x(x,y),g_y(x,y))$. We consider the question: if $D \subseteq \Aut(\C^2)$ is a finite set of polynomial automorphisms of bounded degree, and $A \subseteq \C^2$ is a finite subset of the plane, how can we have non-expansion of the form $|D*A| = \{g*a : (g,a) \in D\times A\} < |A|^{1+\eta}$? This has the form of an Elekes-Szabó problem \cite{ES-groups}, but since $\C^2$ is 2-dimensional it falls outside of the scope of previous results. In \cite[Question~7.5]{homogES}, we proposed a tentative solution to such higher-dimensional Elekes-Szabó problems, which in this case would imply that, for $A$ not concentrating on a curve and of size comparable in exponent to $|D|$, non-expansion is possible only when $D$ concentrates on a coset of a nilpotent algebraic subgroup.

In this paper, we use the amalgamated product structure of $\Aut(\C^2)$, along with model-theoretic techniques originating in \cite{Hr-psfDims} and further developed in \cite{cubicSurfaces} and \cite{homogES}, to confirm this; see Theorem~\ref{c:main-fin} for a precise statement of this form. Analysing the nilpotent subgroups which arise, we furthermore obtain the following statement in the style of the Elekes-Rónyai analysis of expanding bivariate polynomials. The original Elekes-Rónyai result \cite{ER} showed that non-expanding bivariate polynomials are conjugate to addition or multiplication; the analogous condition in the present situation is a little more complicated, and we term it \emph{co-ordinate separability} because it arises from constrained interaction between the variables.

\begin{definition}
    Let $F(x,y,\bar{z}),G(x,y,\bar{z})$ be polynomials over $\C$. We say the pair $(F,G)$ is \emph{co-ordinate separable} if there are $g,h\in\Aut(\C^2)$ and polynomials $t_0(\bar z)$, $t_1(\bar z)$ and $s(y,\bar z)$ such that $g\circ (F(x,y,\bar z),G(x,y,\bar z)) \circ h$ is one of the following:
    \begin{itemize}
        \item $(t_0(\bar z)x,~t_1(\bar z)y)$;
        \item $(t_0(\bar z)x,~y+t_1(\bar z))$;
        \item $(t_1(\bar z)^\ell x+t_0(\bar z)y^\ell,~ t_1(\bar z)y)$ for some $\ell\in\N$;
        \item $(x+s(y,\bar z),~y+t_1(\bar z))$.
    \end{itemize}
\end{definition}

\begin{theorem}\label{t:ER-main}
    Let $F(x,y,\bar{z}),G(x,y,\bar{z})\in\C[x,y,\bar z]\setminus \C[x,y]$. Suppose $(F,G)$ is not co-ordinate separable. Then for any $\varepsilon > 0$, there is $\eta>0$ such that for all finite sets $A\subseteq \C^2$ and $B\subseteq \C^{|\bar z|}$ with
    \begin{enumerate}[(i)]
        \item $|A|^{1/\varepsilon}\geq |B|\geq |A|^\varepsilon\geq \frac1\eta$,
        \item $(F(x,y,b),G(x,y,b))\in\Aut(\C^2)$ for all $b\in B$,
        \item $|A\cap V|\leq |A|^{1-\varepsilon}$ and $|B\cap W|\leq |B|^{1-\varepsilon}$ for all varieties $V\subsetneq \C^2$ and $W\subsetneq \C^{|\bar z|}$ of complexity $<\frac1\eta$,
    \end{enumerate}
    we have $|(F,G)(B)*A| := |\{(F(a,b),G(a,b)):a\in A,b\in B\}|\geq |A|^{1+\eta}$.
\end{theorem}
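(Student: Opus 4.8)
The plan is to reduce to the four normal forms, turn a hypothetical failure of expansion into an abundance of coincidences via Cauchy--Schwarz, and then contradict the general-position hypotheses using the structure theorem for non-expanding configurations.

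First I would normalise. Since $g,h\in\Aut(\C^2)$ act as bijections of $\C^2$, replacing $(F,G)$ by $g\circ(F,G)\circ h$, $A$ by $h^{-1}(A)$, and dropping the outer bijection $g$ leaves $|(F,G)(B)*A|$ and $|A|$ unchanged. A fixed $h$ sends a subvariety of complexity $<\tfrac1\eta$ to one of complexity $<\tfrac{C}{\eta}$ with $C=C(h)$, so hypotheses (i)--(iii) persist after replacing $\eta,\varepsilon$ by constant multiples. Thus I may assume $(F,G)$ is literally one of the four listed forms, and $F,G\notin\C[x,y]$ guarantees that the coefficient functions $t_0,t_1,s$ genuinely depend on $\bar z$.

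Next, suppose toward a contradiction that $|(F,G)(B)*A|<|A|^{1+\eta}$. By (ii) each $(F,G)_b$ is an automorphism, so $|(F,G)_b(A)|=|A|$, and the image is $|B|$ sets of size $|A|$ squeezed into fewer than $|A|^{1+\eta}$ points; Cauchy--Schwarz on the fibres of $(a,b)\mapsto(F,G)_b(a)$ gives
\[
\#\{(a,a',b,b')\in A^2\times B^2:(F,G)_b(a)=(F,G)_{b'}(a')\}\ \geq\ \frac{(|A||B|)^2}{|(F,G)(B)*A|}\ >\ |A|^{1-\eta}|B|^2 .
\]
Setting $g_{b,b'}:=(F,G)_{b'}^{-1}\circ(F,G)_b$, this says that for $\gg|B|^2$ pairs $(b,b')$ one has $|g_{b,b'}(A)\cap A|\gg|A|^{1-\eta}$. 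Computing $g_{b,b'}$ in each case, it ranges over an explicit nilpotent algebraic subgroup $H\le\Aut(\C^2)$ (a subtorus for the first two forms, a unipotent group mixing the coordinates for the latter two), and non-constancy of $t_0,t_1,s$ makes $(b,b')\mapsto g_{b,b'}$ dominant onto $H$ with generically finite fibres.

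I would then feed this into the structure theorem for non-expanding group configurations (Theorem~\ref{c:main-fin}, built on the Elekes--Szab\'o machinery of \cite{Hr-psfDims,homogES}): a large common overlap $|g(A)\cap A|$ for many $g\in H$ should force either $A$ to concentrate on a coset of a proper algebraic subgroup of $H$ --- a bounded-complexity subvariety of $\C^2$, contradicting the $A$-clause of (iii) --- or the parameters $g_{b,b'}$ to concentrate, which, pulled back through the generically finite parametrisation and the fibres of $t_0,t_1,s$, forces $B$ onto a bounded-complexity $W\subsetneq\C^{|\bar z|}$, contradicting the $B$-clause of (iii). Either branch being impossible yields the claimed bound. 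The main obstacle is exactly this last step for the two abelian (diagonal) forms: there the only way to defeat expansion is for $A$ to be nearly invariant under $H$, i.e.\ to align with a product/multiplicative grid in the scaling directions, and converting ``near-invariance under many elements of $H$'' into genuine concentration on a \emph{single} bounded-complexity variety is delicate, since one coordinate alone can always degenerate without violating (iii). It is precisely here that the two-dimensionality of $A$ together with the simultaneous use of both coordinates (both $t_0,t_1$ non-constant, from $F,G\notin\C[x,y]$) must be exploited, while the size balance in (i) is what allows concentration to be transferred faithfully back to $B$.
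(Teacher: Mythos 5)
There is a genuine gap, and it is fatal: the implication you set out to prove is false. You have taken the printed statement at face value and assumed that $(F,G)$ \emph{is} one of the four normal forms; but the printed hypothesis is an evident typo. The paper's own proof assumes expansion fails, derives that $g\circ(F,G)\circ h$ \emph{equals} one of the four forms, and ends with ``this contradicts $(F,G)$ being non-mixing'' --- which is only a contradiction if the intended hypothesis is that $(F,G)$ is \emph{not} non-mixing (cf.\ also the introduction: non-mixing is the analogue of ``conjugate to addition or multiplication'', i.e.\ the non-expanding condition). The four forms are precisely the coset-of-nilpotent-group cases, and for them expansion genuinely fails in general position: take $\bar z=z$, $F=zx$, $G=zy$ (your first form), $A=\{(2^i,2^j):0\le i,j<\sqrt{n}\}$, $B=\{2^k:0\le k<\sqrt{n}\}$. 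Then (i) holds with $\varepsilon=1/4$; (ii) holds since $b\neq 0$; (iii) holds for $A$ because a curve of degree $d<1/\eta$ meets the multiplicative grid in at most $2d\sqrt{n}\le n^{3/4}$ points (for $n$ large), and holds trivially for $B\subseteq\C^1$; yet $(F,G)(B)*A\subseteq\{(2^{i+k},2^{j+k})\}$ has size at most $4n=O(|A|)$. This is exactly why the step you yourself flag as ``delicate'' --- converting near-invariance of $A$ under the diagonal torus into concentration on a single bounded-complexity subvariety --- cannot be carried out: the multiplicative grid is simultaneously nearly invariant and in weak general position.

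Two further problems, for comparison with the paper. First, your appeal to Corollary~\ref{c:main-fin} is vacuous in your setting: that corollary requires the family of automorphisms to be \emph{not} $\varepsilon$-nilpotent, whereas your maps $g_{b,b'}$ lie by construction inside a single nilpotent algebraic group $H$, so the corollary asserts nothing, and the paper contains no dichotomy of the kind you invoke (``either $A$ concentrates or the parameters concentrate''). Second, the paper's proof of the intended statement (not non-mixing $\Rightarrow$ expansion) is structurally different from your sketch: it takes a sequence of counterexamples to expansion, passes to an ultrapower $K=\C^{\U}$, extracts a generic correspondence triangle $(a,f_b,f_b*a)$ with $f_b=(F(x,y,b),G(x,y,b))$, applies Theorem~\ref{t:main} (the amalgamated-product analysis of $\Aut(K^2)$) to place $f_b$ in an $\acl^0(\emptyset)$-definable coset of a conjugate of a connected nilpotent algebraic subgroup, and finally uses Zariski-genericity of $b$ to promote this membership to a polynomial identity $g\circ(F,G)\circ h=(\text{normal form})$, contradicting the hypothesis. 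If you reorient your argument to that implication, your Cauchy--Schwarz and normalization steps become unnecessary; the real content is the structure theorem for $\Aut(\C^2)$, for which your sketch offers no substitute.
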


Note that assumption (iii), which we call ``weak general position'', is achieved in the case of grids, i.e.\ when $A=A_0\times A_1$ with $|A_i|\geq |A|^\varepsilon$ and similarly for $B$.
In the case that the assumption on $A$ fails, meaning that $A$ concentrates on some algebraic curve, the results in \cite{homogES} already apply to yield that abelian groups explain non-expansion.

Let us also remark that the fourth case in the definition of co-ordinate separability is the most interesting, because it corresponds to a nilpotent algebraic group, of nilpotency class depending on the degree in $y$ of $s$. This appears to be the first time that nilpotent non-abelian groups have appeared explicitly in such an Elekes-Szabó result beyond the foundational case \cite{BGT-lin} of approximate subgroups of complex algebraic groups, and we take it as evidence towards our expectation expressed in \cite[Question~7.5]{homogES} that nilpotent algebraic group actions are precisely the structures behind (ternary) Elekes-Szabó phenomena in general. In the present case, the action is already apparent, and the main difficulty is to show that an algebraic subgroup of $\Aut(\C^2)$ is responsible -- $\Aut(\C^2)$ itself being infinite dimensional and not an algebraic group. This suggests considering the corresponding question in the generality of an action on a variety of a group defined as a directed limit of constructible sets, for example the action on $\P^2(\C)$ of the Cremona group $\Bir(\C^2)$ of birational automorphisms of the plane. However, the techniques of this paper crucially exploit the group structure of $\Aut(\C^2)$ (see Remark~\ref{r:genAmalg}), and do not apply even to this case of $\Bir(\C^2)$. We aim to treat this and more general situations in future work.

\section{Group structure of $\Aut(k^2)$}
Let $k$ be an algebraically closed field of characteristic 0.

Consider the group $G := \Aut(k^2)$ of polynomial automorphisms of $k^2$, consisting of those $(g_x,g_y) \in k[x,y]^2$ admitting a compositional inverse which is also of this form.
Jung's Theorem describes $G$ as an amalgamated product, as follows.
Let
\begin{align*}
  E&:=\{(x,y)\mapsto(ax+P(y),by+c):
  P(y)\in k[y],\; a,b,c\in k,\;ab\neq 0\},\\
  A&:=\{(x,y)\mapsto(a_1x+b_1y+c_1,a_2x+b_2y+c_2):
  a_i,b_i,c_i\in k,\; a_1b_2-a_2b_1\neq 0\},\\
  S&:= A\cap E.
\end{align*}
Here $E$ is the group of \emph{elementary automorphisms}, and $A$ is the group of \emph{affine automorphisms}. It is easy to see that $E$ is a countable union of algebraic subgroups $(E_n)_{n\geq 0}$, where $E_n$ is the collection of elements in $E$ in which we restrict the polynomial $P(y)$ to have degree $\leq n$.
Note that $S = E_1$.

Jung's Theorem is then:
\begin{fact}[{\cite{jung}}]
    $G$ is an amalgamated free product of $E$ and $A$ over $S$.
\end{fact}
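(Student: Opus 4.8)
The statement is the classical Jung--van der Kulk theorem, and I would prove it by verifying the two hypotheses of the standard normal-form criterion for amalgamated products: that $E$ and $A$ generate $G$, and that the canonical homomorphism $E *_S A \to G$ (which exists because $E$ and $A$ embed in $G$ and agree on $S$) is injective, i.e.\ that no nontrivial reduced word represents the identity. The equality $E\cap A = S$ holds by the definition of $S$, so these two points suffice. The single tool driving both is the \emph{degree} $d(g) := \max(\deg g_x,\deg g_y)$, for which affine maps and all of $S$ have degree $1$, while every element of $E\setminus S$ has degree $\deg P \geq 2$.

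For generation I would induct on $d(g)$. If $d(g)=1$ then $g\in A$; so suppose $d:=d(g)\geq 2$. Since the Jacobian $\det Dg \in k^\times$ is a nonzero constant, its top-degree homogeneous part must vanish, which forces the leading (top-degree) forms of $g_x$ and $g_y$ to have proportional differentials, hence—being homogeneous polynomials in two variables—to be powers of a common homogeneous form. After a linear change of coordinates (an element of $A$) putting this common form into the variable $y$, a suitable elementary shear $\phi\colon(x,y)\mapsto(x-c\,y^{m},y)\in E$ cancels the top-degree contribution and yields $d(\phi\circ g) < d$. As $\phi\in E$, the inductive hypothesis applied to $\phi\circ g$ writes $g$ as a product of elementary and affine maps.

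For injectivity I would establish \emph{degree multiplicativity} along reduced words. Any reduced word with no elementary syllable is a single element of $A\setminus S$, hence $\neq\id$; so it suffices to treat words containing at least one syllable from $E\setminus S$, for which I claim the degree equals the product of the degrees of the elementary syllables, in particular $d \geq 2^{n}$ where $n$ is the number of such syllables. The mechanism is a ping-pong argument on the leading coefficient vector $[\alpha:\beta]\in\P^1$, where $(\alpha,\beta)$ records the top-degree parts of $(g_x,g_y)$ relative to the common leading form. Left-composing with an element of $E\setminus S$ multiplies the degree by $\deg P$ and collapses the coefficient vector to the horizontal direction $[1:0]$, \emph{provided} the current vector has $\beta\neq 0$; an element of $A\setminus S$ fixes the degree but moves $[1:0]$ to a point with $\beta\neq 0$; and $S$ is precisely the stabilizer of $[1:0]$. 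Thus in a reduced word the interior affine syllables, being outside $S$, repeatedly carry the coefficient vector off $[1:0]$, so each successive elementary syllable incurs no top-order cancellation and genuinely multiplies the degree. Since $d(\id)=1 < 2^{n}$, every such reduced word is nontrivial.

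The main obstacle is the \textbf{non-cancellation lemma} underlying both parts: tracking not merely the degree but the full top-degree homogeneous part of $g_x$ and $g_y$ under composition, verifying that $d(e\circ g) = d(e)\,d(g)$ exactly when the coefficient vector of $g$ differs from $[1:0]$, and confirming that the sole obstruction—membership of the intervening affine map in $S$—is excluded by reducedness. The analogous bookkeeping in the generation step (the case analysis showing the common leading form can always be exploited by an elementary shear to strictly lower $d(g)$) is the other delicate point; once these leading-form computations are in place, assembling the two conditions into the amalgamation statement via the normal-form theorem is routine.
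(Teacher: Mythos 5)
Your statement is the Jung--van der Kulk theorem, which the paper does not prove at all but quotes as a classical fact, so your attempt has to stand on its own. The injectivity half of your argument is essentially van der Kulk's and is sound in outline: for an alternating word, left-composition by an element of $E\setminus S$ multiplies the degree by $\deg P\geq 2$ and makes the first component strictly dominant, an interior affine syllable outside $S$ transfers that dominance to the second component without changing the degree, and so the degree of the word is the product of the degrees of its elementary syllables; hence no nontrivial reduced word is the identity. (Your $[\alpha:\beta]\in\mathbb{P}^1$ bookkeeping only literally parses when both components have the same degree; it is cleaner, and suffices, to track which component has strictly larger degree.)

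The generation half, however, has a genuine gap, and it is the classical one. From $\det Dg\in k^\times$ you correctly get that the top forms satisfy $J(\bar{g}_x,\bar{g}_y)=0$, hence $\bar{g}_x=c_1h^a$ and $\bar{g}_y=c_2h^b$ for a common homogeneous form $h$. But to cancel the top form by an elementary shear, i.e.\ to arrange $\deg(g_x-c\,g_y^{m})<\deg g_x$, you need $\bar{g}_x$ to be a constant times a power of $\bar{g}_y$, which amounts to the divisibility $b\mid a$ (equivalently $\deg g_y\mid \deg g_x$); your formulation additionally needs $h$ to be linear, so that an element of $A$ can move it onto the coordinate $y$. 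Neither of these follows from the vanishing of the top-degree part of the Jacobian: they are exactly the hard content of Jung's theorem, and every known proof derives them from the hypothesis that $g$ is an automorphism (via Abhyankar--Moh/Newton--Puiseux expansions at infinity, birational resolution of the induced map of $\mathbb{P}^2$, or valuation theory), not merely from constancy of the Jacobian. Indeed, your induction uses nothing about $g$ beyond $\det Dg\in k^\times$, so if it were complete it would show that every Jacobian pair in dimension two is a composition of affine and elementary maps -- that is, it would prove the open two-dimensional Jacobian conjecture. (A smaller, repairable issue: when $\deg g_x=\deg g_y$ the required ``shear'' is affine and does not lower $\max(\deg g_x,\deg g_y)$, so the induction needs a finer measure such as $\deg g_x+\deg g_y$.)
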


We proceed to deduce some useful consequences of this fact.
We use the following description of the elements of an amalgamated free 
product, for which see \cite[I.1, Remark to Theorem 1]{Serre}.


\begin{fact} \label{f:altWords}
    Each $g \in G \setminus S$ can be written as an \defn{alternating word},
    $g = \prod_{i < n} h_i$ where each $h_i \in (A \cup E) \setminus S$ and $h_i \in A 
    \Leftrightarrow  h_{i+1} \in E$.
    Furthermore,
    if $g = \prod_{i < n'} h'_i$ is another such expression, then $n'=n$ and there exist
    $s_0,\ldots ,s_{n-2} \in S^{n-1}$ such that
    $(h'_0,\ldots ,h'_{n-1}) = (h_0\cdot s_0^{-1},s_0\cdot h_1\cdot s_1^{-1},\ldots ,s_{n-2}\cdot h_{n-1})$.

    In particular,
    $\prod_{i<k} h_i \cdot S = \prod_{i<k}h'_i \cdot S$
    and $S\cdot\prod_{i\geq k}h_i = S \cdot \prod_{i\geq k}h'_i$
    for any $k \leq  n$.
\end{fact}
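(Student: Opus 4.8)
The plan is to recognise the first two assertions --- existence of an alternating word and its uniqueness up to the $(s_i)$-shifts --- as precisely the normal form theorem for the amalgamated free product $G = E *_S A$ recorded in \cite{Serre}, and then to obtain the final (\emph{in particular}) assertion by a short telescoping computation. For existence I would use that $G$ is generated by $E \cup A$: write $g$ as any finite product of elements of $E \cup A$, then reduce it by repeatedly multiplying together two adjacent factors lying in the same factor group and absorbing any factor lying in $S = A \cap E$ into a neighbour. This strictly shortens the word, so the process terminates; as $g \notin S$, the result cannot be empty, giving a nonempty alternating word $\prod_{i<n} h_i$ with each $h_i \in (A\cup E)\setminus S$ and types alternating between $A$ and $E$.

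The uniqueness clause is the substantive content, and I would invoke Serre's normal form theorem directly rather than reprove it, since it asserts exactly that the length $n$ is an invariant of $g$ and that any two alternating expressions for $g$ differ by the stated $(s_i)$-shifts. The main obstacle, were one to argue from scratch, is the faithfulness underpinning this statement: after fixing transversals of $S$ in $A$ and in $E$, one shows that $G$ acts faithfully on the set of formal normal words (the van der Waerden trick), or equivalently one uses the action of $G$ on its Bass--Serre tree; either route forces $n$ to be well-defined and pins the factors down up to the $S$-valued ambiguities $s_i$.

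Finally, granting the $(s_i)$-description, the \emph{in particular} clause follows by direct calculation. Writing $(h'_0,\dots,h'_{n-1}) = (h_0 s_0^{-1},\, s_0 h_1 s_1^{-1},\, \dots,\, s_{n-2} h_{n-1})$ and adopting the boundary conventions $s_{-1} = s_{n-1} := 1$, the interior pairs $s_i^{-1} s_i$ telescope, giving
\[
  \prod_{i<k} h'_i = \Bigl(\prod_{i<k} h_i\Bigr) s_{k-1}^{-1}
  \qquad\text{and}\qquad
  \prod_{i\geq k} h'_i = s_{k-1}\prod_{i\geq k} h_i
\]
for every $k \leq n$. Since $s_{k-1} \in S$, the first identity yields $\prod_{i<k} h'_i \cdot S = \prod_{i<k} h_i \cdot S$ and the second yields $S \cdot \prod_{i\geq k} h'_i = S \cdot \prod_{i\geq k} h_i$, as required.
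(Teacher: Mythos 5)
Your proposal is correct and matches the paper's treatment: the paper states this as a Fact with no proof, citing exactly Serre's normal form theorem for amalgamated free products \cite[I.1, Remark to Theorem 1]{Serre}, which is what you invoke for the substantive existence and uniqueness claims. Your telescoping computation for the \emph{in particular} clause is the routine verification the paper leaves implicit, and it is carried out correctly.
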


\begin{lemma} \label{l:algPrefix}
    Suppose $g = \prod_{i<n} h_i$ is an alternating word, and $k\leq n$.
    Then the variety $\prod_{i<k} h_i \cdot S$ is defined over the field $\Q(g)$.
\end{lemma}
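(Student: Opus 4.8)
The plan is to use the Galois-theoretic characterization of definable closure: an imaginary lies in $\dcl^0(g)$ exactly when it is fixed by every automorphism of the ambient monster model $\U$ (in the pure field language) fixing $g$. Since $S=E_1$ is an $\emptyset$-definable subgroup of $G$ and $\prod_{i<k}h_i$ is a fixed element of $G$, the left coset $\prod_{i<k}h_i\cdot S$ is a definable subset of $G$ and so has a canonical code $\code{\prod_{i<k}h_i\cdot S}$ in $\U^{\mathrm{eq}}$. It therefore suffices to show that this coset is setwise invariant under every $\sigma\in\Aut(\U/g)$.

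Fix such a $\sigma$. First I would observe that $\sigma$ acts as an automorphism of $G$ compatible with its algebraic structure: composition of polynomial maps is given by formulas over $\Q$, so $\sigma$ commutes with the group multiplication, and the subgroups $A$, $E$, $S$ are each $\emptyset$-(ind-)definable and hence preserved setwise, with $h\in A\setminus S\iff\sigma(h)\in A\setminus S$ and similarly for $E$. Applying $\sigma$ to the alternating word $g=\prod_{i<n}h_i$ then gives $g=\sigma(g)=\prod_{i<n}\sigma(h_i)$, and by these observations $(\sigma(h_i))_{i<n}$ is again an alternating word for $g$. Now the uniqueness clause of Fact~\ref{f:altWords} applies: any two alternating words for $g$ have equal prefix cosets, so $\prod_{i<k}\sigma(h_i)\cdot S=\prod_{i<k}h_i\cdot S$. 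Since $\sigma(S)=S$ and $\sigma$ is a group automorphism, the left-hand side is $\sigma\bigl(\prod_{i<k}h_i\cdot S\bigr)$, so $\sigma$ fixes the coset, and hence its code. As $\sigma$ was arbitrary, $\code{\prod_{i<k}h_i\cdot S}\in\dcl^0(g)$.

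The only real obstacle is the step asserting that every $\sigma$ fixing $g$ respects the group and subgroup structure; this rests entirely on the multiplication of $G$ and the subgroups $A$, $E$, $S$ all being definable over the prime field, so that automorphisms fixing only $g$ already preserve them. It is worth stressing that we are \emph{not} claiming the individual prefixes $\prod_{i<k}h_i$ lie in $\dcl^0(g)$ — they typically do not, being pinned down only up to right multiplication by $S$ — and it is precisely this $S$-ambiguity, encapsulated in Fact~\ref{f:altWords}, that forces the passage to the coset and makes the resulting code well-defined.
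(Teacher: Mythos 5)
Your proposal is correct and is essentially the paper's own proof: the paper likewise takes an arbitrary field automorphism $\sigma$ fixing $g$, notes that $g=\prod_{i<n}\sigma(h_i)$ is again an alternating word, and invokes the uniqueness clause of Fact~\ref{f:altWords} to conclude $\sigma\left(\prod_{i<k}h_i\cdot S\right)=\prod_{i<k}h_i\cdot S$, hence the code lies in $\dcl^0(g)$. Your write-up merely makes explicit the background facts the paper leaves implicit (that $\sigma$ preserves the group operation and the subgroups $A$, $E$, $S$, all being defined over the prime field).
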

\begin{proof}
    If $\sigma$ is a field automorphism of $k$ fixing $g$,
    then $g = \prod_{i<n} \sigma(h_i)$ is another alternating word,
    and so by Fact~\ref{f:altWords} we have
    \[\sigma\left({\prod_{i<k} h_i \cdot S}\right) =
    \prod_{i<k} \sigma(h_i) \cdot S = \prod_{i<k} h_i \cdot S .\qedhere\]
\end{proof}

\begin{lemma} \label{l:algLetters}
    Any $g \in G \setminus S$ can be written as an alternating word $g = \prod_{i<n} 
    h_i$ with each $h_i$ algebraic over $g$.
\end{lemma}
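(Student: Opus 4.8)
The plan is to induct on the length $n$ of an alternating word for $g$, which is well-defined by Fact~\ref{f:altWords}. When $n=1$ we have $g = h_0 \in (A\cup E)\setminus S$, and $h_0 = g \in \acl^0(g)$ trivially. So suppose $n \geq 2$ and fix some alternating word $g = \prod_{i<n} h_i$; without loss of generality $h_0 \in A\setminus S$, the case $h_0 \in E\setminus S$ being symmetric. The idea is to peel off a \emph{first letter} that is algebraic over $g$ and lies in the correct left coset of $S$, thereby reducing to a word of length $n-1$.

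By Lemma~\ref{l:algPrefix} applied with $k=1$, the coset $h_0 S$ lies in $\dcl^0(g)$. Now $h_0 S$ is a coset of the algebraic subgroup $S = E_1$ inside the algebraic group $A$, hence a nonempty closed constructible subset of $A$; after eliminating imaginaries in $\ACF$, it is defined over a real tuple in $\dcl^0(g)$, i.e.\ over the field generated by $g$. Since a nonempty variety over a field has a point in the algebraic closure of that field, we may choose a representative $h_0' \in \acl^0(g)$ with $h_0' S = h_0 S$. As $S \subseteq A$ and $h_0 \notin S$, we get $h_0' \in A\setminus S$, so $h_0'$ is again a legitimate first letter. (In the symmetric case $h_0 \in E$, one first observes that multiplication by $S = E_1$ preserves the $y$-degree of the first coordinate, so that $h_0 S$ is contained in the algebraic group $E_d$ for $d := \deg_E(h_0) \geq 2$, and the identical argument applies inside $E_d$.)

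Set $g' := (h_0')^{-1} g$. Writing $h_0' = h_0 s$ with $s \in S$, we obtain $g' = s^{-1}h_1 h_2\cdots h_{n-1}$, and since $s^{-1}h_1 \in E\setminus S$ this exhibits $g'$ as an alternating word of length $n-1$; in particular $g' \notin S$. Moreover, as composition and inversion are polynomial in the bounded-degree coefficients and $h_0' \in \acl^0(g)$, we have $g' \in \acl^0(g)$ and hence $\acl^0(g') \subseteq \acl^0(g)$. By the induction hypothesis, $g' = \prod_{1\leq i<n} h_i'$ for an alternating word with each $h_i' \in \acl^0(g') \subseteq \acl^0(g)$. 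By the uniqueness part of Fact~\ref{f:altWords}, the first letter of \emph{any} alternating word for $g'$ lies in $E$; hence prepending $h_0' \in A$ yields an alternating word $g = h_0' \prod_{1\leq i<n} h_i'$ with all letters algebraic over $g$, completing the induction.

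The one point requiring care is that $E$ is not itself an algebraic group, so to extract an algebraic coset representative we must confine the relevant coset to a fixed finite-dimensional piece $E_d$; this is exactly what the degree-preservation observation provides. The appeal to elimination of imaginaries — equivalently, to the existence of the quotient varieties $A/S$ and $E_d/S$ together with the Nullstellensatz — is what converts membership of the coset in $\dcl^0(g)$ into the existence of a genuine representative in $\acl^0(g)$, and I expect this to be the only nonformal ingredient of the argument.
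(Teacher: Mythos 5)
Your proof is correct, but it takes a genuinely different route from the paper's. The paper's entire proof is one line: apply Fact~\ref{f:altWords} with the field $k$ taken to be $\acl^0(g)$. That is, since the Jung--van der Kulk amalgamated product decomposition $G(k') = A(k') \star_{S(k')} E(k')$ holds over \emph{every} algebraically closed field $k'$ of characteristic~0, and $\acl^0(g)$ is such a field containing the coefficients of $g$, the element $g$ already admits an alternating word inside $G(\acl^0(g))$, whose letters are by construction algebraic over $g$; one only has to note that a letter lying in $A(\acl^0(g)) \setminus S(\acl^0(g))$ is still outside $S(K)$, which is immediate since membership in $S$ is a coefficient condition. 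You instead stay inside the ambient field and derive the result by induction on word length, peeling off one letter at a time: Lemma~\ref{l:algPrefix} puts $\code{h_0 S}$ in $\dcl^0(g)$, the coset $h_0 S$ is a nonempty variety sitting inside the algebraic group $A$ (or inside $E_d$, via your correct observation that right multiplication by $S=E_1$ preserves the degree of the polynomial part), and model completeness of $\ACF$ (equivalently the Nullstellensatz, i.e.\ $\acl^0(g) \preceq K$) then yields a coset representative $h_0' \in \acl^0(g)$. The trade-off: the paper's proof is shorter but silently relies on the uniformity of Fact~\ref{f:altWords} over all algebraically closed subfields, whereas your argument needs the amalgamation statement only over the one ambient field, extracting the descent to $\acl^0(g)$ from general properties of algebraically closed fields -- at the cost of the extra bookkeeping (the degree-preservation remark, and the appeal to the normal form theorem to see that $g' = (h_0')^{-1}g \notin S$ and that prepending $h_0'$ keeps the word alternating). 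Both arguments are sound; yours is essentially a proof that the amalgamated product structure descends to algebraically closed subfields, which is exactly what the paper's one-liner takes as given.
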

\begin{proof}
    Apply Fact~\ref{f:altWords} with the field $k$ being the algebraic closure of $g$.
\end{proof}

We use conjugation notation $h^g := g^{-1}hg$.
\begin{lemma} \label{l:conjugate}
    Let $f=\prod_{i\leq n}f_i$ and $g=\prod_{i\leq n}g_i$ be alternating words 
    such that $f_i \in A \Leftrightarrow g_i \in A$ for all $i$.
    Suppose $\gfrac gf$ is conjugate in $G$ to an element of $A\cup E$.
    Then there is $m\leq n$ and $\beta \in A\cup E$ such that
    $\gfrac gf = \beta^{(\prod_{i>m}g_i)}$.
\end{lemma}
\begin{proof}
    Let $m \leq  n$ be greatest such that
    $(\prod_{i<m}f_i)\cdot S=(\prod_{i<m}g_i)\cdot S$.
    If $m=n$ then we are done with $\beta = \gfrac gf \in S$,
    so suppose $m<n$.
    Then setting $\gamma := \gfrac{\prod_{i\leq m}g_i}{(\prod_{i\leq m}f_i)}$,
    we have $\gamma \notin S$,
    and $\gamma = f_m^{-1} \cdot \gfrac{\prod_{i<m}g_i}{(\prod_{i<m}f_i)}\cdot g_m \in (f_m^{-1}\cdot S\cdot g_m)$ so $\gamma \in A \Leftrightarrow  g_m \in A$,
    and so
    \begin{equation} \label{e:altW}
        \gfrac gf = (\prod_{i>m}f_i)^{-1}\cdot\gamma\cdot\prod_{i>m}g_i,
    \end{equation}
    is an alternating word.

    Now say $\gfrac gf = \alpha^h$ where $\alpha \in A\cup E$.
    We may assume that either $h = 1$ or $h = \prod_{i<k}h_i$
    is an alternating word with $\alpha^{h_0} \notin A\cup E$,
    since otherwise we may replace $\alpha$ with $\alpha^{h_0}$.
    
    In the case $h=1$, we conclude with $\beta := \alpha$ and $m := n$, so suppose $h \neq  1$.
    Then $\alpha \notin S$,
    and $\alpha^h=(\prod_{i<k}h_i)^{-1}\cdot \alpha\cdot\prod_{i<k}h_i$ is an alternating word.
    Comparing with our previous alternating word \eqref{e:altW},
    we obtain $S\cdot\prod_{i>m}g_i = S\cdot\prod_{i<k}h_i$,
    say $\prod_{i<k}h_i = s\cdot\prod_{i>m}g_i$,
    and we conclude with $\beta := \alpha^s$.
\end{proof}

We also use the following fact that an element of $G$ with infinite fixed point 
set must be conjugate to an elementary automorphism.
\begin{fact}[{\cite[Theorem~2.1, Theorem~0.1]{Bedford-autC2}}]\label{f:infFix}
    Suppose $g \in G$ is such that $g*x=x$ for infinitely many $x \in k^2$. 
    Then there exists $h \in G$ such that $g^h \in E$.
\end{fact}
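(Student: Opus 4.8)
The plan is to combine the amalgamated product structure $G = E *_S A$ with the dynamics of those $g$ that are not conjugate into a factor. The first step is to invoke the Friedland--Milnor classification (which rests on the same amalgam structure as our first Fact): up to conjugacy, every $g \in G$ is either an elementary map in $E$, an affine map in $A$, or a composition of generalized H\'enon transformations $(x,y)\mapsto(y,\,p(y)-\delta x)$ with each $\deg p \geq 2$. This last case is exactly that of the cyclically reduced alternating words of length $\geq 2$, which are conjugate into neither factor.

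The heart of the argument, and the step I expect to be the main obstacle, is to show that a H\'enon-type $g$ has only finitely many fixed points. For this one passes to the induced birational self-map of $\P^2$ and counts fixed points by intersection theory: such a composite has algebraic degree $d = \prod_i \deg p_i \geq 2$, its graph meets the diagonal in finitely many points once the behaviour along the line at infinity is understood, and so $\Fix$ is finite. This is precisely the dynamical content supplied by the cited result, and reproving it would require the full degree-growth analysis of polynomial automorphisms of $k^2$.

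Granting finiteness in the H\'enon-type case, the statement follows by elimination. If $g$ has infinitely many fixed points then $g$ is not H\'enon-type, so $g$ is conjugate into $A$ or into $E$; in the latter case we are done. In the former, some conjugate $g^h$ is an affine automorphism $v \mapsto Mv+c$ with infinite, hence positive-dimensional, fixed locus, so $M-I$ is singular and $\Fix(g^h)$ contains a line $L$. Conjugating by a further affine automorphism carrying $L$ to the $y$-axis brings $g^h$ to the form $(x,y)\mapsto(ax,\,bx+y)$ with $a\neq 0$, and a final conjugation by the coordinate swap $(x,y)\mapsto(y,x)$ turns this into $(x,y)\mapsto(x+by,\,ay) \in E$. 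Composing all the conjugators produces $h'\in G$ with $g^{h'}\in E$, as required.
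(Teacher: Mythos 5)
The paper offers no proof of this Fact at all: it is imported as a black box from the cited source (Bedford's account of the Friedland--Milnor theory), and your argument is precisely the argument underlying that citation --- the conjugacy classification into elementary maps, affine maps, and compositions of generalized H\'enon maps, finiteness of the fixed-point set in the H\'enon case, and the reduction of an affine map with infinite fixed locus to an elementary one. Your elimination logic and the affine step are correct and complete (including the necessary final coordinate swap, since $(x,y)\mapsto(ax,bx+y)$ with $b\neq 0$ is not itself in $E$), and the H\'enon finiteness step, which you rightly flag as the real content, is exactly what the cited theorems supply; so your proposal is essentially the same proof that the paper is invoking by reference.
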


\subsection{Nilpotent algebraic subgroups}

\begin{lemma}\label{l:nilS}
  Any nilpotent algebraic subgroup $H \leq A$ is conjugate in $A$ to a 
  subgroup of $S = E_1$.
\end{lemma}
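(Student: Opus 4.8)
The plan is to identify $A$ with the affine group $\AGL_2(k)$ and reduce the statement to the triangularisability of the \emph{linear parts} of $H$. Write an element of $A$ as a pair $(M,w)$ with linear part $M\in\GL_2(k)$ and translation $w\in k^2$, and embed $A\hookrightarrow\GL_3(k)$ via $(M,w)\mapsto\begin{pmatrix} M & w\\ 0 & 1\end{pmatrix}$. Under this embedding $S=E_1$ is exactly $A\cap B$, where $B\le\GL_3(k)$ is the group of upper-triangular matrices; equivalently, $S$ is the stabiliser in $A$ of the flag $\langle e_1\rangle\subset\langle e_1,e_2\rangle\subset k^3$. The key observation is that every element of $A$ already stabilises the plane $V:=\langle e_1,e_2\rangle$ and acts on it through its linear part $M$; hence $S$ is precisely the stabiliser in $A$ of the line $\langle e_1\rangle\subset V$, and to prove the lemma it suffices to conjugate $H$ so that it fixes \emph{some} line in $V$.

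I would then pass to the linear part via the homomorphism $\pi\colon A\to\GL(V)=\GL_2(k)$, $\pi(M,w)=M$. Its image $\bar H:=\pi(H)$ is a closed subgroup of $\GL_2(k)$ (images of morphisms of algebraic groups are closed) and is nilpotent, being a quotient of $H$. The substantive input is that a connected nilpotent --- a fortiori solvable --- algebraic subgroup of $\GL_2(k)$ has a common eigenvector: this is the Lie--Kolchin theorem, and it produces $N\in\GL_2(k)$ with $N\bar H N^{-1}$ upper triangular, i.e.\ a line $L\subset V$ fixed by $\bar H$, with $NL=\langle e_1\rangle$.

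Finally I would conjugate $H$ itself by $N$, viewed as the linear element $N\in\GL_2(k)\le A$. Conjugation sends $(M,w)\in H$ to $(NMN^{-1},Nw)$, so every element of $NHN^{-1}$ has upper-triangular linear part $NMN^{-1}$ and an arbitrary translation $Nw$; in the $\GL_3$ picture this is an upper-triangular matrix, hence lies in $A\cap B=S$. The translation coordinates never obstruct membership, since elements of $S=E_1$ carry an unconstrained translation $(c_1,c)$. Thus $NHN^{-1}\le S$, as required.

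The only real work is the middle step --- producing a common invariant line for $\bar H$ --- and this is exactly where I expect to invoke (or, in this $2$-dimensional case, reprove by hand) the Lie--Kolchin theorem. It is also the point at which connectedness of $H$ is needed: a disconnected nilpotent group such as $Q_8\le\GL_2(k)$ acts irreducibly on $V$ and so cannot be triangularised, so the hypothesis must be read with $H$ connected. Everything else --- that $A$ preserves $V$, that $S=A\cap B$ is the stabiliser of a line in $V$, and that translations are free in $S$ --- is routine bookkeeping.
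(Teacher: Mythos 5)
Your proof is correct and is essentially the paper's own argument: the paper writes $A \cong \G_a^2 \rtimes \GL_2$, projects $H$ to its linear part, conjugates the nilpotent (hence solvable) image into the Borel subgroup of upper-triangular matrices, and lifts the conjugating element back to $A$, using exactly your observation that $S$ is the full preimage of the Borel so that translations never obstruct; your $\GL_3$ embedding is just packaging for the same computation. Your caveat about connectedness is also well taken: the lemma as stated omits ``connected'' and both your argument and the paper's (via Lie--Kolchin) need it --- your $Q_8$ example shows the statement is false without it --- but this is harmless in context, since in the paper's applications (Lemma~\ref{l:nilE}, Theorem~\ref{t:main}, via Lemma~\ref{l:reduceToNilp}) the subgroup in question is connected.
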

\begin{proof}
  First observe that $A \cong  \G_a^2 \rtimes \GL_2$.

  Let $\pi : A \rightarrow  \GL_2$ be the corresponding projection homomorphism.
  Then $\pi(H)$ is a nilpotent algebraic subgroup of $\GL_2$, so in particular 
  solvable, hence $\pi(H)$ is conjugate in $\GL_2$ to a subgroup of the Borel 
  subgroup of upper triangular matrices $T \leq  \GL_2$.
  Say $\pi(H)^\alpha \leq  T$ with $\alpha \in \GL_2$.

  Let $\beta \in \pi^{-1}(\alpha) \subseteq  A$.
  Then $\pi(H^\beta) = \pi(H)^\alpha \leq  T$,
  so $H^\beta \leq  \pi^{-1}(T) = S$.
\end{proof}

\begin{lemma}\label{l:nilE}
    Fix $n\in\N$. Any connected nilpotent algebraic subgroup of $E_n$ over $k$ is conjugate (in $E_n$) to a subgroup of one of the following nilpotent algebraic groups:
    \begin{enumerate}
       \item $\{(x,y)\mapsto (ax,by): a,b\in k^*\}$.
       \item $\{(x,y)\mapsto (ax,y+b): a\in k^*,b\in k\}$.
       \item $\{(x,y)\mapsto (b^\ell x+ay^\ell,by): a\in k, b\in k^*\}$ for some $\ell\in\N$.
       \item $\{(x,y)\mapsto (x+P(y),y+b): P(y)\in k[y]_{\leq n},b\in k\}$.
    \end{enumerate}
    The groups in (1)-(3) are abelian.
\end{lemma}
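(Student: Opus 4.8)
The plan is to read off the structure of $E_n$ as a connected solvable algebraic group and then apply the standard structure theory of such groups. Write a general element of $E_n$ as a tuple $(a,P,b,c)$ standing for $(x,y)\mapsto(ax+P(y),by+c)$, with $a,b\in k^*$, $P\in k[y]_{\leq n}$ and $c\in k$. The composition law shows that $(a,P,b,c)\mapsto(a,b)$ is a homomorphism onto $\G_m^2$ whose kernel is $U:=\{(1,P,1,c)\}$, the elements fixing both scalings. One checks that $U$ is the unipotent radical of $E_n$, that $T:=\{(a,0,b,0)\}\cong\G_m^2$ is a maximal torus, and that $E_n=U\rtimes T$ is connected solvable; in particular $U$ is exactly the set of unipotent elements of $E_n$. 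Observe that the four target groups are, respectively, $T$ (case~1), $\{(a,0,1,c)\}$ (case~2), $\{(b^\ell,ay^\ell,b,0)\}$ (case~3), and $U$ itself (case~4).

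Now let $H\leq E_n$ be connected nilpotent. By the structure theory of connected nilpotent linear algebraic groups, $H=T_H\times U_H$, where $T_H$ is its (central, unique) maximal torus and $U_H=H\cap U$ is its unipotent part. Since every torus lies in a maximal torus and all maximal tori of $E_n$ are $E_n$-conjugate, after conjugating in $E_n$ we may assume $T_H\leq T$; as $U$ is normal this preserves $U_H\leq U$, and moreover $U_H\leq C_U(T_H)$ because $T_H$ and $U_H$ commute. It therefore suffices, for each subtorus $T_H\leq T$, to compute the centralizer $C_U(T_H)$ and to identify the group $T_H\cdot C_U(T_H)$ with one of the four listed groups.

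The computation splits on $\dim T_H$. If $\dim T_H=0$ then $H\leq U$, case~(4). If $\dim T_H=2$ then $T_H=T$, and a direct calculation gives $C_U(T)=1$, so $H=T$, case~(1). The remaining case is $\dim T_H=1$: write $T_H=\{(\lambda^p,0,\lambda^q,0):\lambda\in k^*\}$ for a primitive $(p,q)\in\mathbb{Z}^2$. Conjugating $(1,P,1,c)\in U$ by $(\lambda^p,0,\lambda^q,0)$ yields $(1,\lambda^pP(y/\lambda^q),1,\lambda^qc)$, so $C_U(T_H)$ consists of those $(1,P,1,c)$ with $\lambda^qc=c$ and $\lambda^pP(y/\lambda^q)=P(y)$ for all $\lambda$. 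Reading off the constraints on $c$ and on the individual monomials of $P$ gives: for $q=0$ (so $p=\pm1$) we get $C_U(T_H)=\{(1,0,1,c)\}$, hence case~(2); for $q=\pm1$ with $\ell:=p/q$ a nonnegative integer $\leq n$ we get $C_U(T_H)=\{(1,ay^\ell,1,0)\}$, and one verifies $T_H\cdot C_U(T_H)$ is exactly the degree-$\ell$ group of case~(3); in all other cases $C_U(T_H)=1$, so $H=T_H\leq T$ lands in case~(1). The abelianness of the groups in (1)--(3) then follows by direct multiplication.

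The only non-formal point — and hence the main obstacle — is the $\dim T_H=1$ centralizer computation, specifically verifying that for $q=\pm1$ the product $T_H\cdot C_U(T_H)$ coincides with the group of case~(3) in its given parametrization rather than some conjugate twist of it. This is exactly where the exponent $\ell=p/q$ being a nonnegative integer $\leq n$ enters, and where one uses that primitivity of $(p,q)$ forces $q=\pm1$ whenever $\ell$ is an integer, so that no subtorus with $|q|\geq2$ can contribute a nontrivial unipotent part.
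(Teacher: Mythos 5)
Your proof is correct, and it takes a genuinely different route from the paper's. The paper decomposes $E_n \cong G_x \rtimes G_y$, where $G_x = \{(x,y)\mapsto(ax+P(y),y)\}$ and $G_y = \{(x,y)\mapsto(x,by+c)\}$, and proceeds by a hands-on case analysis: project $N$ to $G_y$, intersect with $G_x$, and drive the cases by explicit iterated-commutator computations (nilpotency entering through the non-vanishing of iterated commutators), with the case-(3) groups finally pinned down by an argument using independent generic elements of $N$ and $\acl(\emptyset)$; notably, that last argument produces the target group only up to a conjugate twist $\tilde N = (N_\ell)^\xi$, which must then be undone by a further conjugation. You instead use the decomposition $E_n = U \rtimes T$ into unipotent radical and maximal torus and import standard structure theory: a connected nilpotent linear algebraic group is the direct product of its unique central maximal torus and its unipotent part, maximal tori of the connected solvable group $E_n$ are conjugate, and subtori of $T \cong \G_m^2$ are parametrized by primitive pairs $(p,q)$. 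This reduces the whole lemma to the single centralizer computation $C_U(T_H)$, which you carry out correctly, including the degree bound $\ell \leq n$ and the observation that primitivity kills every monomial when $|q| \geq 2$; and because you normalize the torus first, the case-(3) group comes out exactly in its stated parametrization, with no residual twist to correct. What your route buys is brevity and a conceptual organization of the four cases by subtorus type, entirely avoiding the paper's genericity/independence step; what it costs is reliance on nontrivial imported theorems where the paper's commutator calculus is essentially self-contained. Both arguments use connectedness of $N$ in an essential way: yours through the structure theorem $H = T_H \times U_H$, the paper's through the step $\pi(N) = G_{y,m}$ and its generic-element argument.
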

\begin{proof}
  \newcommand{\ee}[1]{\langle#1\rangle}
  In this proof, we write elements of $E$ in the form $\ee{ax+P(y),by+c}$.

  Let $N$ be a nilpotent subgroup of $E_n$. 

  We consider the following algebraic subgroups of $E_n$:
  \begin{align*}
  G_{x,a} &:= \{\ee{x+P(y),y} : P\in k[x]_{\leq n}\} \\
  G_{x,m} &:= \{\ee{ax,y} : a \in k^*\} \cong \G_m\\
  G_{y,a} &:= \{\ee{x,y+c} : c\in k\} \cong \G_a\\
  G_{y,m} &:= \{\ee{x,by} : b\in k^*\} \cong \G_m\\
  G_x &:= \{\ee{ax+P(y),y}:P\in k[x]_{\leq n},a\in k^*\} \cong G_{x,a}\rtimes G_{x,m} \\
  G_y &:= \{\ee{x, by+c}:b\in k^*, c\in k\} \cong G_{y,a}\rtimes G_{y,m}
  \end{align*}
  It is easy to see that $G_x$ is a normal subgroup of $E_n$ and $E_n=G_xG_y$, and thus $E_n\cong G_x\rtimes G_y$. Consider the corresponding group homomorphism
  \[\pi:G\to G_y;\; \ee{ax+P(y),by+c} \mapsto \ee{x,by+c}.\]
  Then $\pi(N)$ is a nilpotent subgroup of $G_y \cong G_{y,a}\rtimes G_{y,m}$, so by conjugating we may assume that $\pi(N) \leq  G_{y,a}$ or $\pi(N) \leq  G_{y,m}$.

Now consider $H:= G_x\cap N$, a nilpotent subgroup of $G_x$. 
For elements of $G_x$, we denote $\ee{ax+P(y),y}$ by $ax+P(y)$.

Suppose there $H \not\leq G_{x,a}$, so say $\phi = ax+P(y)\in H$ with $a\neq 1$. Let $\psi = a'x+Q(y) \in G_x$. Then we calculate
\[ [\phi,\psi]=x+(aa')^{-1}((a-1)Q(y)-(a'-1)P(y)). \] 
Thus, $C_{G_x}(\phi)=\{a'x+(a-1)^{-1}(a'-1)P(y):a'\in k^*\}=(G_{x,m})^\theta$ where $\theta:=(a-1)x+P(y)$. Suppose $H\not\leq (G_{x,m})^\theta$. Then there is $\phi'=x+P'(y)\in [H,H]$ with $P'(y)\neq 0$. Consider $[\phi',\phi]=x+a^{-1}(1-a)P'(y)\neq x$. Inductively, $[\cdots [[\phi',\phi],\phi],\cdots]\neq x$, contradicting $H$ being nilpotent. We conclude that, by conjugating $N$ by an element of $G_x$, which does not change $\pi(N)$, we may assume that either $H \leq  G_{x,a}$ or $H \leq  G_{x,m}$. 

Suppose $H$ is a non-trivial subgroup of $G_{x,m}$, so say $\psi = \ee{a'x,y}\in H$ with $a'\neq 1$.
Now $H$ is normal in $N$, so if $\phi = \ee{ax+P(y),cy+d} \in N$, then
\[\psi^\phi=\ee{a'x+a^{-1}(a'-1)P(y), y} \in G_{x,m},\] and so $P(y)=0$. So $N$ is a nilpotent subgroup of $\{\ee{ax,by+c}: a,b\in k^*,c\in k\}$, which is isomorphic to the upper triangular subgroup of $\GL_2$, and so we see that $N$ is conjugate to a subgroup of 
\begin{itemize}
  \item $\{\ee{ax,by}: a,b\in k^*\}\cong \G_m^2$ or
  \item $\{\ee{ax,y+b}: a\in k^*,b\in k\}\cong \G_m\times \G_a$.
\end{itemize}

Otherwise, $H\leq G_{x,a}$. Then $N$ induces an algebraic homomorphism $\rho : \pi(N) \to G_{x,m}$ defined by
$\rho(c) = a$ if $\ee{ax+P(y),y+c} \in N$, which is well-defined since $N \cap G_x = H\leq G_{x,a}$.

First suppose $\pi(N)\leq G_{y,a}$. Then $\rho$ has trivial image, since there are no non-trivial algebraic homomorphisms $\G_a \to \G_m$, and so \[N\leq \{\ee{x+P(y),y+b}: P(y)\in k[y]_{\leq n},b\in k\},\] as required.

We are left with the case that $H\leq G_{x,a}$ and $\pi(N)$ is a non-trivial subgroup of $G_{y,m}$, in which case $\pi(N) = G_{y,m}$ by connectedness of $N$. Then $\rho : G_{y,m}\to G_{x,m}$ induces a homomorphism from $\G_m\to\G_m$, and so \[N\leq \{\ee{b^\ell x+P(y),by}: P(y)\in k[y]_{\leq n},b\in k^*\}:=G_\ell,\] for some constant $\ell\in \N$. 

 We conclude by showing that $N$ is conjugate to a subgroup of $\{\ee{b^\ell x+ay^\ell,by}: a\in k, b\in k^*\}:=N_\ell$.

We first show that $H \leq  \{x+ay^\ell:a\in k\}$.
Otherwise, take $\phi=\ee{x+P(y),y}\in H$ with $P(y)=\sum_i a_iy^i$ and $a_i \neq  0$ for some $i \neq  \ell$. Recall that $\pi(N)=G_{y,m}$, and so we can find $\psi=\ee{b^\ell x+Q(y),by}\in N$ with $b^m\neq 1$ for all $m\in\N^{>0}$. Then \[ [\phi,\psi]=x+b^{-\ell}P(by)-P(y)=x+\sum_{i\neq \ell} a_i(b^{-\ell}b^i-1)y^i.\] Thus, $[\phi,\psi]\not\in \{x+ay^\ell:a\in k\}$. Now inductively, we have $[\cdots[[\phi,\psi],\psi],\cdots]\not\in  \{x+ay^\ell:a\in k\}$, which cannot be trivial, contradicting $N$ being nilpotent. Thus, $H \leq  \{x+ay^\ell:a\in k\}\cong \G_a$. Since $\pi(N)= G_{y,m}$, it follows that $N$ is nilpotent of dimension at most 2, hence is abelian. 

Let $\phi:=\ee{b^\ell x+P(y),by}\in N$ and $\psi:=\ee{d^\ell x+Q(y),dy}\in N$ be independent generic elements of $N$. Then since $N$ is abelian, \[x = [\phi,\psi]=x+(bd)^{-\ell}(b^\ell Q(y)-Q(by)+P(dy)-d^{\ell}P(y)),\] and so $b^\ell Q(y)-Q(by)=d^{\ell}P(y)-P(y)$. Say $P(y)=\sum_{i\leq m} a_i^\phi y^i$ and $Q(y)=\sum_{j\leq m} c_j^\psi y^j$. Then for all $i\neq \ell$, we have \[c_i^\psi(b^\ell-b^i)y^i=a_i^\phi(d^\ell-d^i)y^i.\] Hence, $a_i^\phi/(b^\ell-b^i)=c_i^\psi/(d^\ell-d^i)$ (since $\phi$ and $\psi$ are generic, $b^\ell-b^i\neq 0$ and $d^\ell-d^i\neq 0$ for $i\neq \ell$). Since $\phi\ind\psi$, we get $a_i^\phi, b\ind c_i^\psi, d$. Thus, $a_i^\phi/(b^\ell-b^i)=c_i^\psi/(d^\ell-d^i)=:\alpha_i\in\acl(\emptyset)$, and so $\phi$ and $\psi$ belong to \[\{\ee{b^\ell x+cy^\ell+\sum_{i\neq \ell, i\leq m}\alpha_i(b^\ell-b^i)y^i,by}: b\in k^*, c\in k\}=:\tilde N.\] Let $\xi := \langle x+\sum_{i\leq m,i\neq\ell}\alpha_i y^i,y\rangle$. Then it is easy to check that $\tilde N=(N_\ell)^\xi$. Since $\tilde N$ is defined over $\acl(\emptyset)$ and contains the generic $\phi$ of $N$, we get $N \leq  \tilde N$, so $N$ is conjugate to a subgroup of $N_{\ell}$ as desired.
\end{proof}

\section{Correspondence triangles with bounded dimension families of infinite fixed point sets}

We work in an ultrapower $K = \C^\U$ of the complex field, and
we follow the notation and setup of \cite[Section 2]{cubicSurfaces};
recall in particular the definitions of $\bdl$, $\dind$, $\acl^0$, $\trd^0$, $\equiv^0$,
and wgp. We write $\code{Z}$ for the \emph{code} of a constructible set $Z$, 
i.e.\ a finite tuple of generators for the least field over which $Z$ is 
defined.

Consider $G := \Aut(K^2)$ with its action on $X := K^2$.

\begin{definition}
  We call a triple $(a,g,a*g) \in X \times G \times X$ a \defn{generic 
  correspondence triangle} if
  \begin{itemize}
    \item $\trd^0(a)=\dim(X)$;
    \item $a$ and $g$ are wgp;
    \item $a \dind g \dind g*a$.
  \end{itemize}
\end{definition}

\begin{lemma} \label{l:bndFixBdl}
    Suppose $(a,g,g*a) \in X\times G\times X$ is a generic correspondence triangle.

    Let $m \in \N$ be such that for any $h \equiv ^0 g$, if $F_h := 
    \Fix(\gfrac gh)$ is infinite then $\trd^0(\code{F_h}/g) < m$.
        
    Then $\bdl(g) \leq  m\bdl(a)$.
\end{lemma}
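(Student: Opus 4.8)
The plan is to pin $g$ down, up to $\acl^0$, by its action on an $m$-element independent generic configuration, and to read the dimension bound off additivity of $\bdl$. First I would fix a $\bdl$-Morley sequence $a_0,\dots,a_{m-1}$ of realisations of $\tp^0(a/g)$, chosen moreover Zariski-generic and $\trd^0$-independent over $g$ (possible since $a$ is Zariski-generic over $g$ and $a\dind g$). Write $\bar a=(a_0,\dots,a_{m-1})$, $C=(a_0,g*a_0,\dots,a_{m-1},g*a_{m-1})$, and $C_i=(a_0,g*a_0,\dots,a_{i-1},g*a_{i-1})$. The Morley construction and $a\dind g$ give $a_i\dind_{C_i}g$ and $\bdl(a_i/C_i)=\bdl(a)$, while $\dcl^0(a,g)=\dcl^0(g,g*a)$ together with $a\dind g\dind g*a$ gives $\bdl(g*a)=\bdl(a)$. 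A step-by-step application of additivity of $\bdl$ then yields the telescoping identity
\[
  \bdl(g)=\bdl(g/C)+\sum_{i<m}\bdl(g*a_i/C_i,a_i),
\]
in which every summand is at most $\bdl(g*a_i)=\bdl(a)$. Hence $\bdl(g)\le\bdl(g/C)+m\bdl(a)$, and the lemma reduces to $\bdl(g/C)=0$, for which it suffices to prove $g\in\acl^0(C)$.

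Second I would prove $g\in\acl^0(C)$ by contradiction, via the fixed-point hypothesis. If $g\notin\acl^0(C)$ there is $h\equiv^0_C g$ with $h\neq g$. Since $a_i,g*a_i\in C$, such an $h$ satisfies $h*a_i=g*a_i$, i.e.\ $a_i\in\Fix(\gfrac{g}{h})=X_h$ for all $i$. As the action is faithful and $\gfrac{g}{h}\neq 1$, the set $X_h$ is a proper subvariety, so $\dim X_h\le\dim X-1$. A standard additivity computation for $\trd^0$, using that the $a_i\in X_h$ are Zariski-generic and independent over $g$ whereas $\trd^0(a_i/g,\code{X_h})\le\dim X_h$, gives
\[
  m\dim X=\trd^0(\bar a/g)\le\trd^0(\code{X_h}/g)+m(\dim X-1),
\]
so $\trd^0(\code{X_h}/g)\ge m$. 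When $X_h$ is infinite this contradicts the hypothesis, and in that case we are done.

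The main obstacle is the case in which $X_h$ is finite, which the hypothesis does not govern: then $\bar a\subseteq X_h\subseteq\acl^0(g,h)$ is entirely consistent, and the transcendence count no longer bites — indeed $\code{X_h}$ then has large transcendence degree over $g$ precisely because it already encodes the generic points $a_i$. To close this case I would argue that a finite fixed set cannot absorb $m$ independent Zariski-generic points, either by bounding $|X_h|$ through the degree of $\gfrac{g}{h}$ (which is controlled since $h\equiv^0 g$) and taking $m$ beyond that bound, or — staying inside the coarse-dimension world — by running the analogue of the above inequality with $\bdl$ in place of $\trd^0$ and invoking weak general position of $a$ to force the conjugate difference to have an \emph{infinite} fixed set, thereby routing every witnessing $h$ back into the regime the hypothesis controls. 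Making this finite-versus-infinite dichotomy rigorous, and in particular converting the transcendence-degree control supplied by the hypothesis into coarse-dimension control compatible with the telescoping identity, is the delicate point on which the whole argument turns.
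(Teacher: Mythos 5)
Your first two steps are sound: the telescoping identity $\bdl(g)=\bdl(g/C)+\sum_{i<m}\bdl(g*a_i/C_ia_i)$ is a correct additivity computation, each summand is indeed $\leq\bdl(g*a_i)=\bdl(a)$, and your transcendence-degree argument in the infinite fixed-set case is essentially the one in the paper (modulo the shared caveat that Zariski-genericity of the $a_i$ over $g$, i.e.\ $\ind^0$-independence from $g$, has to be arranged). But the gap you flag at the end is fatal, not a delicate technicality, because the statement you reduce to --- $g\in\acl^0(C)$ --- is \emph{false} precisely in the regime where the lemma is needed. Counting transcendence degrees, $\trd^0(g/C)=\trd^0(g)+\trd^0(C/g)-\trd^0(C)\geq\trd^0(g)+2m-4m$, so $g\notin\acl^0(C)$ as soon as $\trd^0(g)>2m$; in the application (Theorem~\ref{t:main}) the lemma is invoked with a fixed $m=D+1$ for elements $g_i$ whose $\bdl$, hence $\trd^0$, grows without bound, so this is the typical case. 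Consequently some $h\equiv^0_C g$ with $h\neq g$ must exist, and --- since your infinite-case argument correctly excludes infinite $\Fix(\gfrac gh)$ --- every such witness has \emph{finite} fixed-point set. This is no accident: all witnesses lie in $g\cdot\operatorname{Stab}(a_0,\dots,a_{m-1})$, an infinite-dimensional coset in $\Aut(K^2)$ containing plenty of elements with finite fixed sets (e.g.\ $g\sigma$ with $\sigma:(x,y)\mapsto(x+P(y),\,y+Q(x+P(y)))$, $P(y_i)=0=Q(x_i)$, $P,Q\neq 0$, whose fixed set is the finite set $\{P(y)=0\}\cap\{Q(x)=0\}$). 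Neither of your proposed repairs works: you cannot ``take $m$ beyond the bound'' because $m$ is given in the statement, and enlarging the configuration to $m'$ points only weakens the telescoping bound to $m'\bdl(a)$; and wgp applied to $a_i\in X_h$ only yields $\bdl(a_i/gh)=0$, which contradicts nothing, because a single field-theoretic conjugate $h$ over $C$ carries no counting ($\bdl$) information whatsoever.

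The paper avoids this trap by never arguing from a single configuration. It fixes $t$ (by elimination of $\exists^\infty$ in $\ACF_0$) so that $|X_h|>t$ forces $X_h$ infinite, and proves that the relation $y=z*x$ on $\tp(\a,g*\a)\times\tp(g)$ omits $K_{s,2}$ with $s=t^m+1$: if some $h\neq g$ agreed with $g$ on $t^m+1$ \emph{distinct} $m$-tuples, the pigeonhole principle would force $|X_h|>t$, hence $X_h$ infinite, and only then does the transcendence computation (your infinite case) apply. In other words, the finite fixed-set case is excluded by the multiplicity of configurations, which is exactly what your single-$C$ framework cannot provide. The conversion of this $K_{s,2}$-omission into the conclusion $\bdl(g)\leq m\bdl(a)$ is then the K\H{o}v\'ari--S\'os--Tur\'an-type incidence bound applied in the ultraproduct, via \cite[Lemma~2.15]{BB-cohMod} and the argument of \cite[Lemma~7.7]{homogES}; that counting input is the indispensable ingredient for which your $\acl^0$-argument cannot substitute.
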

\begin{proof}
    Let $t \in \N$ be such that for any $h \equiv ^0 g$,
    if $|F_h| > t$ then $F_h$ is infinite;
    such a $t$ exists by, for example, elimination of $\exists^{\infty}$ in 
    $\ACF_0$.

    Let $\a = (a_0,\ldots ,a_{m-1})$ with $a_i \equiv _g a$ and $a_i \dind_g a_{<i}$.
    Write $g*\a$ for the diagonal action.

    Let $s := t^m+1$. We show that the relation $y=z*x$ on 
    $\tp(\a,g*\a)\times\tp(g)$ omits $K_{s,2}$.

    Indeed, suppose for a contradiction that there is $h \equiv ^0 g$ with $h \neq  g$
    and distinct realisations $\a_0,\ldots ,\a_{t^m}$ of $\tp^0(\a)$
    such that $g*\a_i = h*\a_i$ for all $i$.
    Then by the pigeonhole principle,
    the projections of these tuples to some co-ordinate has cardinality $>t$,
    and hence $|F_h| > t$.

    Let $e := \code{F_h}$.
    By the choices of $t$ and $m$, we have $\trd^0(e/g) < m$.
    Recalling $\a_0 \ind ^0 g$,
    we obtain $\trd^0(\a_0/ge) \geq  \trd^0(\a_0) - \trd^0(e/g) > \trd^0(\a_0) - m$.

    But $h \neq  g$ and the action on the irreducible variety $X$ is faithful and by rational maps,
    so we have $\dim(F_h) < \dim(X)$, but $\a_0 \in F_h^m$,
    and so $\trd^0(\a_0/ge) \leq  m\dim(F_h) \leq  m\dim(X) - m = \trd^0(\a_0) - m < \trd^0(\a_0/ge)$, contradiction.

    Now \cite[Lemma~2.15]{BB-cohMod} applies
    and we conclude as in \cite[Lemma~7.7]{homogES}.
\end{proof}
\begin{remark}
    Note that we can always take $m = \trd^0(g)+1$, but we will be interested in 
    cases where we can do better.
\end{remark}
\begin{remark}
  A natural level of generality for Lemma~\ref{l:bndFixBdl} would be that of 
  an Ind-constructible group $G$ (in the sense of \cite[Definition~7.1]{HPP}) 
  with a faithful birational action on an irreducible variety $X$, all over 
  $\acl^0(\emptyset)$.
\end{remark}

\section{$\Aut(K^2)$}
We continue to consider $G = \Aut(K^2)$ and its action on $X = K^2$.

\begin{lemma} \label{l:bndFix}
    Let $d \in \N_{>0}$, and let $g \in \left<{A, E_d}\right>$.
    Suppose $h \equiv ^0 g$ and
    $F_h := \Fix(\gfrac gh)$ is infinite. Then $\trd^0(\code{F_h}/g) \leq  \max(\dim(A),\dim(E_d))$.
\end{lemma}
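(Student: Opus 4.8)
The plan is to reduce the computation of $\code{X_h}$ to the fixed-point set of a single short word produced by Lemma~\ref{l:conjugate}. We may assume $h\neq g$, since otherwise $\gfrac gh = \id$ and $X_h = X$ has code over $\acl^0(\emptyset)$. The degenerate case $g\in S$ is also immediate: then $h\in S$ as well (membership in the constructible group $S$ is $\equiv^0$-invariant), so $\gfrac gh = h^{-1}g\in S\subseteq A$, whence $\code{X_h}\in\dcl^0(\gfrac gh)$ and $\trd^0(\code{X_h}/g)\leq \dim S\leq \dim A$. So assume $g\notin S$. As $X_h = \Fix(\gfrac gh)$ is infinite, Fact~\ref{f:infFix} gives that $\gfrac gh$ is conjugate in $G$ to an element of $E\subseteq A\cup E$, which is exactly the hypothesis of Lemma~\ref{l:conjugate}.

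Next I would fix compatible alternating-word decompositions. Using Lemma~\ref{l:algLetters} write $g=\prod_{i\leq n}g_i$ with each $g_i$ algebraic over $g$; since $g\in\langle A,E_d\rangle$ and reduced words over the sub-amalgam $\langle A,E_d\rangle = A *_S E_d$ have letters in $A\cup E_d$, while any two alternating words agree letterwise modulo $S$ by Fact~\ref{f:altWords}, we may take $g_i\in A\cup E_d$. Choosing a field automorphism $\sigma$ over $\acl^0(\emptyset)$ with $\sigma(g)=h$ (possible as $h\equiv^0 g$), the decomposition $h=\prod_{i\leq n}\sigma(g_i)$ has letters $h_i:=\sigma(g_i)\in A\cup E_d$ algebraic over $h$. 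Lemma~\ref{l:conjugate} now supplies $m\leq n$ and $\beta\in A\cup E$ with $\gfrac gh = \beta^{\gamma}$, where $\gamma:=\prod_{i>m}g_i\in\acl^0(g)$ (a product of letters algebraic over $g$).

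The heart of the argument is to bound the complexity of $\beta$, by showing $\beta\in A\cup E_d$. Since each $g_i\in A\cup E_d$ we have $\gamma\in\langle A,E_d\rangle$, and since $\sigma$ fixes $\langle A,E_d\rangle$ setwise we have $h\in\langle A,E_d\rangle$, hence $\gfrac gh = h^{-1}g\in\langle A,E_d\rangle$; therefore the conjugate $\beta=\gamma\,(\gfrac gh)\,\gamma^{-1}$ lies in $\langle A,E_d\rangle$ too. As $\beta\in A\cup E$ and, from the sub-amalgam structure, $\langle A,E_d\rangle\cap E = E_d$, we conclude $\beta\in A\cup E_d$. (Alternatively, one may inspect the proof of Lemma~\ref{l:conjugate}: there $\beta$ lies in $S\gamma_0 S$ for the middle letter $\gamma_0 = h_m^{-1}s'g_m$ of the alternating word realising $\gfrac gh$, with $s'\in S$, and $S\gamma_0 S\subseteq A\cup E_d$ because $A$ and $E_d$ are groups containing $S$.) In either case $\trd^0(\beta)\leq \max(\dim A,\dim E_d)$.

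Finally I would transport the estimate back across the conjugation. Since $\Fix(\beta^{\gamma}) = \gamma^{-1}*\Fix(\beta)$ and $\gamma^{-1}\in\acl^0(g)$ while $\code{\Fix(\beta)}\in\dcl^0(\beta)$, the code of $X_h$ lies in $\acl^0(g,\beta)$, giving
\[
    \trd^0(\code{X_h}/g)\leq \trd^0(\beta/g)\leq \trd^0(\beta)\leq \max(\dim A,\dim E_d).
\]
I expect the \emph{main obstacle} to be the middle step: controlling the degree of the conjugacy representative $\beta$, i.e.\ ruling out that the reduction escapes to a high-degree elementary automorphism outside $E_d$. Once $\beta$ is confined to $A\cup E_d$, the remaining bookkeeping with $\acl^0$, $\dcl^0$, and the amalgam normal form is routine.
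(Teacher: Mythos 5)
Your proposal is correct and follows essentially the same route as the paper's own proof: reduce via Fact~\ref{f:infFix} and Lemma~\ref{l:conjugate} to $\gfrac gh = \beta^{\eta}$ with conjugator $\eta \in \acl^0(g)$ a tail of $g$'s alternating word, pin $\beta$ down to $A \cup E_d$ using $E \cap \left<A,E_d\right> = E_d$, and then bound $\trd^0(\code{X_h}/g)$ by $\trd^0(\beta)$ via $X_h = \eta^{-1}*\Fix(\beta)$. Your write-up merely makes explicit a few points the paper leaves implicit (the $h=g$ and $g \in S$ cases, and the fact that the letters of any alternating word for an element of $\left<A,E_d\right>$ automatically lie in $A \cup E_d$), which is fine.
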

\begin{proof}
    First suppose $g \notin S$, so say
    $h=\prod_{i\leq N}h_i$ and $g=\prod_{i\leq N}g_i$ are alternating words with
    $h_i \in \acl^0(h)$ and $g_i \in \acl^0(g)$.
    Note that $h_i \in A \Leftrightarrow g_i \in A$ for all $i$, by 
    Fact~\ref{f:altWords}.
    By Fact~\ref{f:infFix}, $\gfrac gh$ is conjugate to an element of $E$, so 
    by Lemma~\ref{l:conjugate},
    $\gfrac gh = \beta^\eta$ where $\beta \in A\cup E$ and $\eta = 
    \prod_{i>m}g_i$, where $m \leq  N$.
    Note that actually $\beta \in A\cup E_d$,
    since $\beta^\eta, \eta \in \left<{A,E_d}\right>$ and $E \cap \left<{A,E_d}\right> = E_d$.

    In the case $g \in S$, we similarly have $\gfrac gh = \beta^\eta$
    with $\eta=1$ and $\beta = \gfrac gh \in S \subseteq  A\cup E_d$.

    Now $F_h = \eta^{-1}*\Fix(\beta)$,
    so $\code{F_h} \in \acl^0(\eta,\beta) \subseteq  \acl^0(g,\beta)$,
    and so $\trd^0(\code{F_h}/g) \leq  \trd^0(\beta) \leq  \max(\dim(A),\dim(E_d))$.
\end{proof}

\begin{lemma} \label{l:neta}
    Suppose $d \in G$, and $d = \prod_{i<n} f_i$ is an alternating word, and
    \begin{itemize}\item $n>1$;
    \item $\tp(d)$ is wgp;
    \item $f_i \in \acl^0(d)$;
    \item $\code{f_0\cdot S} \notin \acl^0(\emptyset )$
        and $\code{S\cdot f_{n-1}} \notin \acl^0(\emptyset )$.
    \end{itemize}

    Suppose $(d_0,\ldots ,d_{2N-1})$ is a $\dind$-independent sequence of 
    realisations of $\tp(d)$.
    Let $h_N := \prod_{i<2N} d_i^{(-1)^{i+1}}$.
    Then $\bdl(h_N) \geq N\bdl(\code{f_0\cdot S})$.
\end{lemma}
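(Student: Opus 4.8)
The plan is to read off an explicit alternating word for $h_N$, extract from it $N$ prefix coset codes via Lemma~\ref{l:algPrefix}, and show that these codes force $N$ independent copies of $\bdl(\code{f_0\cdot S})$ into $\bdl(h_N)$.

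First I would pin down the alternating word of
$h_N = \prod_{i<2N} d_i^{(-1)^{i+1}} = d_0^{-1}d_1 d_2^{-1}\cdots d_{2N-2}^{-1}d_{2N-1}$.
Writing each $d_i=\prod_{j<n}f_j^{(i)}$ for the alternating word obtained from $\tp(d)$ (so $f_j^{(i)}\equiv^0 f_j$ and, by Lemma~\ref{l:algLetters}, $f_j^{(i)}\in\acl^0(d_i)$), the concatenation produces two kinds of junction. At an \emph{even} junction, between $d_{2k}^{-1}$ and $d_{2k+1}$, the extreme letters $(f_0^{(2k)})^{-1}$ and $f_0^{(2k+1)}$ lie in the same free factor and combine into $a_k:=(f_0^{(2k)})^{-1}f_0^{(2k+1)}$; at an \emph{odd} junction, between $d_{2k+1}$ and $d_{2k+2}^{-1}$, the letters $f_{n-1}^{(2k+1)}$ and $(f_{n-1}^{(2k+2)})^{-1}$ combine into $b_k:=f_{n-1}^{(2k+1)}(f_{n-1}^{(2k+2)})^{-1}$. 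The crucial point is that no cancellation cascades inward: $a_k\in S$ would force $\code{f_0^{(2k)}\cdot S}=\code{f_0^{(2k+1)}\cdot S}$, impossible as these are $\dind$-independent realisations of the type of $\code{f_0\cdot S}$, which is non-algebraic by hypothesis; dually $b_k\notin S$ using $\code{S\cdot f_{n-1}}\notin\acl^0(\emptyset)$. Hence $h_N$ is genuinely reduced, with the $a_k,b_k$ as merged interior letters.

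Next, for $k<N$ set $v_k:=\prod_{i\leq 2k}d_i^{(-1)^{i+1}}=d_0^{-1}d_1\cdots d_{2k}^{-1}\in\dcl^0(d_0,\ldots,d_{2k})$. The prefix of the alternating word of $h_N$ terminating at the letter $a_k$ equals $v_k\,f_0^{(2k+1)}$ (the segment of $d_{2k}^{-1}$ preceding its last letter, together with that last letter absorbed into $a_k$, reassembles $d_{2k}^{-1}$). By Lemma~\ref{l:algPrefix} its coset code
$c_k:=\code{v_k\,f_0^{(2k+1)}\cdot S}$ lies in $\dcl^0(h_N)$.

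Finally I run the dimension count, using the standard $\bdl$-calculus for wgp tuples (additivity, monotonicity of $\bdl(\cdot/-)$ under enlarging the base, and the identification of $\dind$ with $\bdl$-independence). Since $v_k\in\dcl^0(d_0,\ldots,d_{2k})$, left multiplication by $v_k$ is a coset bijection, so $c_k$ is interdefinable with $\code{f_0^{(2k+1)}\cdot S}$ over $d_0,\ldots,d_{2k}$; as $f_0^{(2k+1)}\in\acl^0(d_{2k+1})$ and $d_{2k+1}\dind d_0,\ldots,d_{2k}$, this gives $\bdl(c_k/d_0,\ldots,d_{2k})=\bdl(\code{f_0^{(2k+1)}\cdot S})=\bdl(\code{f_0\cdot S})$. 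The earlier codes satisfy $c_0,\ldots,c_{k-1}\in\acl^0(d_0,\ldots,d_{2k-1})$, so monotonicity yields $\bdl(c_k/c_0,\ldots,c_{k-1})\geq\bdl(c_k/d_0,\ldots,d_{2k})=\bdl(\code{f_0\cdot S})$. Summing via the chain rule and using $c_k\in\dcl^0(h_N)$,
\[
\bdl(h_N)\;\geq\;\bdl(c_0,\ldots,c_{N-1})\;=\;\sum_{k<N}\bdl(c_k/c_0,\ldots,c_{k-1})\;\geq\;N\,\bdl(\code{f_0\cdot S}).
\]
I expect the main obstacle to be the first step: carefully verifying that the concatenation is a reduced alternating word, i.e.\ that the genericity of the boundary cosets $f_0\cdot S$ and $S\cdot f_{n-1}$ stops any cancellation from propagating past the merged letters $a_k,b_k$. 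Once the word is fixed, locating the prefixes and carrying out the $\bdl$-arithmetic is routine.
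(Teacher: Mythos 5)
Your proof is correct and is essentially the paper's own argument: the same merged-letter alternating word for $h_N$ (with cancellation at the junctions blocked because the coset codes $\code{f_0^{(i)}\cdot S}$ and $\code{S\cdot f_{n-1}^{(i)}}$ are independent realisations of non-algebraic types, hence distinct), the same prefix codes $c_k=\code{v_k\, f_0^{(2k+1)}\cdot S}\in\dcl^0(h_N)$ extracted via Lemma~\ref{l:algPrefix}, and the same chain-rule computation of $\bdl$ conditioning on $d_{\leq 2k}$. The only differences are cosmetic (the paper transfers the letters $f_{i,j}$ by the type rather than invoking Lemma~\ref{l:algLetters}, and writes your $v_k$ as $h_k\cdot d_{2k}^{-1}$).
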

\begin{proof}
    Take $f_{i,j}$ such that $d(f_j)_{j<n} \equiv  d_i(f_{i,j})_{j<n}$.

    Let $c_i := \code{f_{i,0} \cdot S} \in \acl^0(d_i) \setminus \acl^0(\emptyset )$.
    Then $(c_i)_i$ are $\dind$-independent and $\ind ^0$-independent,
    and in particular distinct (since $c_i \notin \acl^0(\emptyset )$),
    so $\gfrac{f_{i+1,0}}{f_{i,0}} \notin S$.
    
    Similarly, $f_{i,n-1}\cdot f_{i+1,n-1}^{-1} \notin S$.

    Hence for $k \leq  N$,
    $h_k = \prod_{i<2k} d_i^{(-1)^{i+1}}$ can be expressed as an alternating word as follows:
    $$h_k = f_{0,n-1}^{-1}\cdot\ldots \cdot f_{0,1}^{-1}\cdot(\gfrac{f_{1,0}}{f_{0,0}}) \cdot f_{1,1} \cdot \ldots \cdot f_{1,n-2}\cdot (f_{1,n-1}\cdot f_{2,n-1}^{-1})\cdot f_{2,n-2}^{-1}\cdot \ldots  \cdot f_{2k-1,n-1} .$$

    Let $c_{2k+1}' := \code{h_k\cdot d_{2k}^{-1} \cdot f_{2k+1,0} \cdot S}$.
    By Lemma~\ref{l:algPrefix}, $c_{2k+1}' \in \acl^0(h_l)$ if $l > k$.
    But $c_{2k+1}'$ is interalgebraic with $c_{2k+1}$ over $d_{\leq 2k}$.
    So
    \begin{align*} \bdl(h_N)
    &\geq  \bdl((c_{2k+1}')_{k<N}) \\
    &= \sum_{k<N} \bdl(c_{2k+1}'/(c_{2i+1}')_{i<k}) \\
    &\geq  \sum_{k<N} \bdl(c_{2k+1}'/d_{\leq 2k}) \;\;\text{(since $c_{2i+1}' \in \acl^0(h_k) \subseteq  \acl^0(d_{\leq 2k})$)} \\
    &= \sum_{k<N} \bdl(c_{2k+1}/d_{\leq 2k}) \;\;\text{(since $c_{2k+1} \in \acl^0(d_{2k+1})$ and $d_{2k+1}\dind d_{\leq 2k}$)}\\
    &= \sum_{k<N} \bdl(c_{2k+1}) \\
    &= N \bdl(c_0) ,\end{align*}
    as required.
\end{proof}

\begin{lemma}\label{l:reduceToNilp}
    Suppose $(a,g,g*a) \in X\times G\times X$ is a generic correspondence triangle.
    Suppose $Y := \locus^0_G(g)$ is contained in a left coset of an algebraic subgroup $H \leq G$.
    Then $Y$ is contained in a left coset $C$ of a connected nilpotent algebraic subgroup of 
    $H$, with $C$ defined over $\acl^0(\emptyset)$.
\end{lemma}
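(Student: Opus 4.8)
The plan is to use the amalgamated-product structure to reduce to the case where $H$ is an honest algebraic group (a vertex group $A$ or $E_d$), and there to invoke the Elekes--Szabó theory for algebraic group actions to obtain nilpotency. First I would make three harmless reductions. Since $Y=\locus^0_G(g)$ is irreducible it meets only one coset of the connected component $H^\circ$, so we may take $H$ connected. Replacing $H$ by $H_{\min}:=\langle Y^{-1}Y\rangle\le H$, the connected algebraic subgroup generated by $Y^{-1}Y$ (note $Y^{-1}Y\subseteq H$ since $Y$ lies in a coset of $H$), we may assume $H=H_{\min}$; this is defined over $\acl^0(\emptyset)$, and then $C:=YH_{\min}$ is a single left coset, defined over $\acl^0(\emptyset)$, containing $Y$. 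A nilpotent subgroup of a conjugate of $H_{\min}$ pulls back to one of $H_{\min}$, so it suffices to show $H_{\min}$ is nilpotent, and we are free to conjugate by $\acl^0(\emptyset)$-elements. Finally, $H_{\min}$ is a connected constructible subgroup of $G$, hence lies in some $\langle A,E_d\rangle$.

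Next I would use Lemma~\ref{l:neta} to force $g$ into a vertex group. Take $g$ generic in $Y$ and write $g=\prod_{i<n}f_i$ as an alternating word with $f_i\in\acl^0(g)$ (Lemma~\ref{l:algLetters}); $\tp(g)$ is wgp. For a $\dind$-independent sequence $d_0,\dots,d_{2N-1}$ of realisations of $\tp(g)$, the telescoping product $h_N:=\prod_{i<2N}d_i^{(-1)^{i+1}}$ lies in $H_{\min}$, since each pair $d_{2j}^{-1}d_{2j+1}\in Y^{-1}Y\subseteq H_{\min}$; hence $\bdl(h_N)\le\dim H_{\min}$ is bounded independently of $N$. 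If both $\code{f_0\cdot S}$ and $\code{S\cdot f_{n-1}}$ were non-algebraic over $\emptyset$, Lemma~\ref{l:neta} would give $\bdl(h_N)\ge N\,\bdl(\code{f_0\cdot S})\to\infty$, a contradiction. So either $n=1$ or one end-coset is $\acl^0(\emptyset)$-definable; in the latter case I peel it off, choosing an $\acl^0(\emptyset)$-representative $c$ with $cS=f_0S$ (resp.\ $Sc=Sf_{n-1}$) and passing to $c^{-1}g$ (resp.\ $gc^{-1}$), which by Fact~\ref{f:altWords} is an alternating word of length $n-1$ whose locus is the translate $c^{-1}Y$ (resp.\ $Yc^{-1}$), still contained in a left coset of $H_{\min}$ (resp.\ of the $\acl^0(\emptyset)$-conjugate $H_{\min}^{c^{-1}}$). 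Translation by $\acl^0(\emptyset)$-elements preserves wgp, genericity, and the generic correspondence triangle (adjusting $a$ or $g*a$ by a fixed automorphism of $X$), so the hypotheses persist and I may iterate. When the induction terminates at $n=1$, after these translations $g$ is a single letter and $Y\subseteq A$ or $Y\subseteq E$; since $H_{\min}$ is then $\le A$ or $\le E\cap\langle A,E_d\rangle=E_d$, we have reduced to $H_{\min}$ inside an algebraic group.

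Inside the vertex group I would establish nilpotency. Now $(a,g,g*a)$ is a generic correspondence triangle whose $g$ generates the connected algebraic group $H_{\min}\le A$ or $H_{\min}\le E_d$, acting algebraically on $X=K^2$. This is exactly the Elekes--Szabó situation for an algebraic group action, and I would deduce nilpotency of $H_{\min}$ from the theory of \cite{homogES} (in the spirit of \cite{BGT-lin}), feeding in the fixed-point bounds of Lemmas~\ref{l:bndFix} and \ref{l:bndFixBdl}: a non-nilpotent connected $H_{\min}$ would, through its image in $\GL_2$ (for $A$) or its non-nilpotent triangular part (for $E_d$), force generic differences $\gfrac gh$ to have fixed loci too sparse to be compatible with the balance $\bdl(g)\le m\,\bdl(a)$ of Lemma~\ref{l:bndFixBdl}. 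Once $H_{\min}$ is nilpotent, $C=YH_{\min}$ is the required coset, with the classification of Lemmas~\ref{l:nilS} and \ref{l:nilE} available for later use.

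I expect the main obstacle to be this last step: converting the quantitative non-expansion bound into the qualitative nilpotency of $H_{\min}$, and in particular ruling out the reductive directions of $\GL_2$ and the solvable-but-not-nilpotent directions of $E_d$ even when $Y$ is a curve, where dimension counting alone does not suffice. By contrast, the reduction of the infinite-dimensional problem to a genuine algebraic subgroup, via Lemma~\ref{l:neta} and the amalgam structure, is the conceptual heart but should be the more routine half once the telescoping estimate is in place.
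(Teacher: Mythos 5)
There is a genuine gap, and it sits exactly where the entire content of the lemma lies. Once you have set $H_{\min} := \langle Y^{-1}Y\rangle \le H$ (this is the paper's $N$, and your reductions up to this point match the paper's), the only thing left to prove is that this group is nilpotent. The paper obtains this in one line: it checks that no non-trivial element of $N$ fixes $X$ pointwise (faithfulness --- a hypothesis you never verify), and then cites \cite[Theorem~4.4]{cubicSurfaces}, which applies to an arbitrary algebraic group acting faithfully on an irreducible variety; no reduction to the vertex groups $A$ or $E_d$ is needed. Your proposal never supplies this step: you gesture at deducing nilpotency ``from the theory of \cite{homogES}'' by ``feeding in'' Lemmas~\ref{l:bndFix} and \ref{l:bndFixBdl}, but those lemmas only produce the upper bound $\bdl(g)\le m\,\bdl(a)$, which holds perfectly well when $H_{\min}$ is, say, all of $A$ or the full solvable triangular part of $E_d$; nothing in them distinguishes nilpotent groups from solvable or reductive ones, so no contradiction of the kind you describe (``fixed loci too sparse'') can come out of them. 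The nilpotency genuinely requires the separate Elekes--Szab\'o-for-algebraic-groups input (stabilizer/approximate-subgroup technology in the spirit of \cite{BGT-lin}), which is precisely what the citation encapsulates; flagging this step as ``the main obstacle'' does not discharge it.

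Separately, the middle portion of your argument --- peeling off $\acl^0(\emptyset)$-definable end cosets and telescoping $h_N := \prod_{i<2N} d_i^{(-1)^{i+1}} \in H_{\min}$ against Lemma~\ref{l:neta} --- is both misplaced and flawed. Misplaced, because the hypothesis of this lemma already hands you an algebraic $H$; reducing further to a vertex group is the business of Theorem~\ref{t:main} (where the paper runs exactly this kind of argument), and the citation that finishes the present lemma does not care where $H$ sits inside $G$. Flawed, because your contradiction rests on the inequality $\bdl(h_N) \le \dim H_{\min}$, which is not valid: $\bdl$ is coarse pseudofinite dimension relative to $\xi$, not transcendence degree, and elements of a finite-dimensional algebraic group can have arbitrarily large (indeed infinite) $\bdl$. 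The paper only ever bounds the $\bdl$ of such telescoped products via Lemma~\ref{l:bndFixBdl}, i.e.\ by verifying that each telescoped element is again the middle of a generic correspondence triangle --- the inductive verification that occupies most of the proof of Theorem~\ref{t:main}. So the half you worked out in detail is not needed for this lemma and does not work as written, while the half you deferred \emph{is} the lemma.
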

\begin{proof}
  Let $N := \langle Y^{-1}Y \rangle \leq H$.
  Since $G$ acts faithfully, no non-trivial element of $N$ fixes $X = \locus^0(a)$ pointwise.
  By \cite[Theorem~A.4]{cubicSurfaces} (which is based on results in \cite{BGT-lin}), $N$ is nilpotent as required.
  Finally, $N$ is over $\acl^0(\emptyset)$ by definition, and hence so is $C := 
  YN$.
\end{proof}

\begin{theorem} \label{t:main}
    Suppose $(a,g,g*a) \in X\times G\times X$ is a generic correspondence triangle.
    Then $g \in C$ for some left coset $C$ of a conjugate of a connected nilpotent algebraic subgroup $N$ of $A\cup E$,
    with $C$ defined over $\acl^0(\emptyset)$.
    
    Hence, by Lemmas~\ref{l:nilS} and \ref{l:nilE}, $N$ can to be taken to be one of the nilpotent groups listed in Lemma~\ref{l:nilE}.
\end{theorem}
\begin{proof}
    By Lemma~\ref{l:reduceToNilp}, it suffices to show the conclusion without ``nilpotent''.

    If $g \in E\cup A$, we are done, since $A$ and each $E_d$ is an algebraic subgroup.
    So say $g = \prod_{i<n} f_i$ is an alternating word with $n>1$.

    Let $c := \code{f_0\cdot S}$ and $c' := \code{S\cdot f_{n-1}}$.
    If $c \in \acl^0(\emptyset )$,
    then say $f_0' \in f_0\cdot S \cap \acl^0(\emptyset )$,
    and let $g' := \gfrac g{f_0'}$.
    Then $g'$ can be written with a shorter alternating word than $g$.

    Since the property of being a left coset of a conjugate of an algebraic 
    subgroup
    of $E\cup A$ is preserved by multiplication both on the left and on the right,
    iterating this process,
    we may assume that $c,c' \notin \acl^0(\emptyset )$.
    Note then that $c \nind ^0 g$, so $\bdl(c)>0$ by wgp.

    Inductively define
    \begin{itemize}\item $g_0 := g$;
    \item $g_i'$ such that $g_i' \equiv _{g_i*a} g_i$ and $g_i' \dind_{g_i*a} g_i$;
    \item $g_{i+1} := \gfrac{g_i}{g_i'}$.
    \end{itemize}

    We show by induction $i$ that $(a,g_i,g_i*a)$ is a generic correspondence triangle.
    This holds for $i=0$ by assumption. Suppose it holds for $i$.
    Then $g_i' \dind g_i*a$, so $g_i' \dind (g_i*a)g_i$, and so $g_i' \dind ag_i$,
    hence $a \dind g_ig_i'$ and in particular $a\dind g_{i+1}$.
    Also $g_i'(g_{i+1}*a) \equiv _{g_i*a} g_ia$, since $a = g_i^{-1}*g_i*a$ and $g_{i+1}*a = (g_i')^{-1}*g_i*a$,
    so $g_{i+1}*a \dind g_i'$,
    but as above $g_i \dind (g_i*a)g_i'$ and so $g_i \dind (g_{i+1}*a)g_i'$,
    hence $g_{i+1}*a \dind g_ig_i'$ and in particular $g_{i+1}*a \dind g_{i+1}$.
    Finally, $g_{i+1}$ is wgp by \cite[Lemma~2.13]{cubicSurfaces}.

    Say $d$ is such that $g \in \left<{A,E_d}\right>$, and let $D := \max(\dim(A),\dim(E_d))$.
    Then by Lemma~\ref{l:bndFix}
    and Lemma~\ref{l:bndFixBdl} applied to the generic correspondence triangle $(a,g_i,g_i*a)$,
    we have $\bdl(g_i) \leq  (D+1)\bdl(a)$.

    However, by Lemma~\ref{l:neta}, we have $\bdl(g_{i+1}) \geq 2^i\bdl(c)$,
    so for $i$ large enough this contradicts $\bdl(c)>0$.
\end{proof}

\begin{remark}\label{r:genAmalg}
    This proof would go through generally for an amalgamated product $G = G_1 
    \star_{G_0} G_2$ of Ind-constructible groups acting faithfully 
    birationally on an irreducible variety $X$,
    where $G_0$ is constructible, $G_1$ and $G_2$ are unions of constructible subgroups, 
    and each $g \in G \setminus (G_1\cup G_2)^G$ has $\Fix(g)$ finite. However, this 
    is still a rather special situation, which for example does not include 
    the case of the Cremona group of birational maps of the plane.
\end{remark}

\section{Finitary consequences}
With the same proof as \cite[Theorem 9.3]{homogES}, we get the following finitary consequence.

\begin{definition}
    Let $F\subseteq \Aut(\C^2)$ be a finite subset and $\varepsilon>0$. We say $F$ is \emph{$\varepsilon$-nilpotent}, if there are $f,g\in \Aut(\C^2)$ and a connected nilpotent algebraic subgroup $N_d\leq E_d$ such that \[|F\cap fN_dg |\geq |F|^{1-\epsilon}.\]
\end{definition}

\begin{corollary}\label{c:main-fin}
For all $\varepsilon > 0$ and $n\in\N$, there is $\eta>0$ such that the following holds. 
    Let $F\subseteq \Aut(\C^2)$ be a finite set of polynomial automorphisms consisting of polynomials of degree at most $n$,
    and let $A$  be a finite subset of $\C^2$. Suppose
    \begin{itemize}
        \item $|A|^{1/\varepsilon}\geq |F|\geq |A|^{\varepsilon}\geq \frac{1}{\eta}$;
        \item For all algebraic curves $C \subseteq \A^2$ of complexity $<\frac1\eta$, we have $|A \cap C| \leq |A|^{1-\varepsilon}$;
        \item $F$ is not $\varepsilon$-nilpotent.
    \end{itemize}
    Then \[|F*A| = |\{f*a:f\in F,a\in A\}| \geq |A|^{1+\eta}.\]
\end{corollary}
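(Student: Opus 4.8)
The plan is to derive Corollary~\ref{c:main-fin} from Theorem~\ref{t:main} by the standard correspondence-theorem machinery, exactly as in \cite[Theorem~9.3]{homogES}. The overall strategy is a contrapositive transfer argument: assume that for every $\eta>0$ there is a counterexample, i.e.\ a finite set $F$ of bounded-degree automorphisms and a finite set $A$ satisfying the two hypotheses (size comparability and weak general position for $A$) but with $|F*A|<|A|^{1+\eta}$ while $F$ is \emph{not} $\varepsilon$-nilpotent. Letting $\eta\to 0$ along a sequence of such counterexamples and taking an ultraproduct, I would pass to the ultrapower $K=\C^\U$ and obtain pseudofinite internal sets $\tilde F\subseteq G=\Aut(K^2)$ and $\tilde A\subseteq X=K^2$ carrying the associated $\bdl$-dimension (the pseudofinite logarithmic counting dimension) coming from the size estimates. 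The key point is that the bounded degree hypothesis confines $\tilde F$ to $\langle A,E_n\rangle$, so all the structure theory of Section~4 applies.

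Next I would run the correspondence argument that converts the counting/non-expansion hypothesis into the existence of a generic correspondence triangle. Concretely, the non-expansion bound $|F*A|<|A|^{1+\eta}$ together with the weak general position of $A$ forces, after the usual pigeonholing and genericity extraction (choosing $a\in\tilde A$ and $g\in\tilde F$ generic for $\bdl$ and ensuring the independence relations $a \dind g \dind g*a$ and the wgp conditions hold), a triple $(a,g,g*a)\in X\times G\times X$ satisfying all four clauses of the definition of a generic correspondence triangle. This is the heart of the translation from the finitary Elekes-Szabó hypothesis to the model-theoretic setup, and it is precisely the content imported from \cite[Theorem~9.3]{homogES}; the only new input is checking that the relation $y=g*x$ has the right fibration dimensions (each of the three projections of $\tp(a,g,g*a)$ being controlled), which follows from $X$ being $2$-dimensional and $g$ acting as an automorphism.

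Having produced the generic correspondence triangle, Theorem~\ref{t:main} applies directly and tells us that $g$ lies in a left coset $C$, defined over $\acl^0(\emptyset)$, of a conjugate $N^\gamma$ of one of the connected nilpotent algebraic subgroups $N\leq E_d$ listed in Lemma~\ref{l:nilE}. I would then translate this structural conclusion back to the finite level: since $C$ is a coset of a conjugate of a connected nilpotent algebraic subgroup of $E_d$ (and conjugation writes $C=fN_dg$ for suitable $f,g\in\Aut(\C^2)$ and a connected nilpotent $N_d\le E_d$), the fact that $g$ is generic in $\tilde F$ with positive $\bdl$-mass concentrated in $C$ means, upon specialising the ultraproduct back down, that a positive proportion $|F\cap fN_dg|\geq |F|^{1-\varepsilon}$ of $F$ lies in such a coset. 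That is exactly the statement that $F$ is $\varepsilon$-nilpotent, contradicting our assumption and completing the proof.

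The main obstacle, as usual in this transfer, is the extraction of the generic correspondence triangle with all independence and wgp conditions satisfied simultaneously --- in particular arranging $a \dind g \dind g*a$ while retaining genericity of $a$ in $X$ and wgp of both $a$ and $g$. This requires the careful pseudofinite-dimension bookkeeping of \cite[Section~9]{homogES}, and the verification that the bounded-degree hypothesis survives the ultraproduct so that $\tilde F\subseteq\langle A,E_n\rangle$ (which is what lets Lemma~\ref{l:bndFix} supply the bound $\max(\dim A,\dim E_n)$ needed inside Theorem~\ref{t:main}). The rest is a faithful adaptation of the cited argument, and I expect no genuinely new difficulties beyond confirming that the $2$-dimensionality of $X$ and the coset structure of $E_d$ interact as required.
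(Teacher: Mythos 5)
Your proposal is correct and follows essentially the same route as the paper, which itself gives no independent argument but simply invokes the transfer machinery of \cite[Theorem~9.3]{homogES}: counterexample sequence, ultraproduct with pseudofinite dimension $\bdl$, extraction of a generic correspondence triangle $(a,g,g*a)$, application of Theorem~\ref{t:main}, and specialisation of the resulting $\acl^0(\emptyset)$-defined nilpotent coset back to the finite level to contradict non-$\varepsilon$-nilpotence. The one step you flag but do not resolve --- securing wgp of $\tp(g)$, which unlike wgp of $\tp(a)$ is not directly guaranteed by the hypotheses --- is precisely the bookkeeping the paper also delegates wholesale to \cite{homogES}, so your deferral matches the paper's.
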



Combining these techniques with our analysis of the nilpotent algebraic subgroups of $\Aut(\C^2)$, we can now prove the statement in the spirit of Elekes-Rónyai given in the introduction.

\begin{proof}[Proof of Theorem~\ref{t:ER-main}]
    Suppose for a contradiction that there is some $\varepsilon > 0$ with no $\eta$ satisfying the conclusion of the theorem. Taking $\eta=1/n$ we can find $A_n\subseteq \C^2,B_n\subseteq \C^{|\bar{z}|}$ such that \begin{enumerate}
        \item $|A_n|^{1/\varepsilon}\geq |B_n|\geq |A|^\varepsilon\geq n$;
        \item $(F(x,y,b),G(x,y,b))\in\Aut(\C^2)$ for all $b\in B_n$;
        \item $|A_n\cap V|\leq |A_n|^{1-\varepsilon}$ and $|B_n\cap W|\leq |B_n|^{1-\varepsilon}$ for all varieties $V\subsetneq \C^2$, $W\subsetneq \C^{|\bar z|}$ of complexity $<n$;
        \item $|(F,G)(B_n)*A_n| < |A_n|^{1+1/n}$
    \end{enumerate}
    Work in the ultraproduct $K=\C^\mathcal{U}$, and assume $F$ and $G$ are defined over $k_0 := \acl^0(\emptyset)$. Let $\xi:=(|A_n|)_n\in\N^\mathcal{U}$ let $\bdl:=\bdl_\xi$.
    Let $A:=\prod_{n\in\mathcal{U}} A_n$,  $B:=\prod_{n\in\mathcal{U}} B_n$ and $C:=(F,G)(B)*A$. Then by (1), we have $\bdl(B)\in [\varepsilon,1/\varepsilon]$, and  $\bdl(C)=\bdl(A)=1$ by (4). Let $(a,b)\in A\times B$ be such that $\bdl(a,b)=\bdl(A)+\bdl(B)$. Then $a\dind b$, $\bdl(a)=\bdl(A)$ and $\bdl(b)=\bdl(B)$. Let $f_b=(F(x,y,b),G(x,y,b))$. Then $f_b\in\Aut(K^2)$ by (2), and $f_b \in \acl^0(b)$ by choice of $k_0$, so $a\dind f_b$. By (3) we also have $(a,b)$ is generic in $K^{|\bar z|+2}$ and $\tp(a),\tp(b)$ wgp, therefore $\tp(f_b)$ is also wgp. If $\bdl(f_b)=0$, then $f_b\in\acl^0(\emptyset)=k_0$, namely $F(x,y,b)=f_0(x,y)$ and $G(x,y,b)=g_0(x,y)$ for some $f_0,g_0\in k_0[x,y]$. Since $b$ is generic in $K^{|\bar z|}$ over $k_0$, we get the equality of polynomials $F(x,y,\bar z)=f_0(x,y)$, contradicting $F\not\in\C[x,y]$. Thus $\tp(f_b)$ is broad. Consider $f_b*a\in C$, we must have $\bdl(f_b*a)\leq \bdl(C)=\bdl(A)=\bdl(a)$. Thus, $\bdl(f_b)+\bdl(f_b*a)\geq\bdl(f_b,f_b*a)=\bdl(f_b,a)=\bdl(f_b)+\bdl(a)\geq\bdl(f_b)+\bdl(f_b*a)$ and we must have the equality, namely $f_b*a\dind f_b$. In conclusion, $(a,f_b,f_b*a)$ is a generic correspondence triangle. By Theorem~\ref{t:main}, there is a connected nilpotent algebraic group $N$ from one of the four possibilities listed in Lemma~\ref{l:nilE} and $g,h\in\Aut(K^2)$ with $N,h,g$ all defined over $k_0=\acl^0(\emptyset)$ such that $gf_bh\in N$. We treat the case that $N$ is conjugate to $\{(x,y)\mapsto(cx,dy):c,d\in K^*\}=G_{x,m}\times G_{y,m}$; the other cases are similar. Since $N$ is defined over $k_0$, by multiplying $g$ and $h$ with elements in the automorphism group $\Aut(k_0^2)$, we may assume $gf_bh\in G_{x,m}\times G_{y,m}$. Now $(cx,dy) = g\circ (F(x,y,b),G(x,y,b))\circ h = (\sum_{i,j} t_{i,j}(b)x^iy^j, \sum_{i,j} t'_{i,j}(b)x^iy^j)$ with $t_{i,j},t'_{i,j} \in k_0[\bar z]$. Thus $c = t_{1,0}(b)$ and $d = t'_{0,1}(b)$ and $t_{i,j}(b)=0=t'_{i,j}(b)$ for all other $i,j$. Now $b$ is generic over $k_0$, so we get the equality $g\circ (F(x,y,\bar z),G(x,y,\bar z))\circ h = (t_{1,0}(\bar z)x, t'_{0,1}(\bar z)y)$ in $k_0[\bar z][x,y]$. So $(F,G)$ is co-ordinate separable, contrary to assumption.
\end{proof}

\bibliographystyle{alpha}
\bibliography{autC2}

\begin{thebibliography}{BGT11}

\bibitem[BB21]{BB-cohMod}
Martin Bays and Emmanuel Breuillard.
\newblock Projective geometries arising from {E}lekes-{S}zabo problems.
\newblock {\em Ann. Sci. \'{E}c. Norm. Sup\'{e}r. (4)}, 54(3):627--681, 2021.

\bibitem[BDZ25]{cubicSurfaces}
Martin Bays, Jan Dobrowolski, and Tingxiang Zou.
\newblock Elekes-{S}zab{\'{o}} for collinearity on cubic surfaces.
\newblock 2025.
\newblock arxiv:2212.14059v4.

\bibitem[Bed18]{Bedford-autC2}
Eric Bedford.
\newblock Dynamics of polynomial automorphisms of {$\mathbb{C}^2$}.
\newblock 2018.
\newblock Lecture notes
  \url{https://www.math.stonybrook.edu/~ebedford/ClassNotesHenon.pdf}.

\bibitem[BGT11]{BGT-lin}
Emmanuel Breuillard, Ben Green, and Terence Tao.
\newblock Approximate subgroups of linear groups.
\newblock {\em Geom. Funct. Anal.}, 21(4):774--819, 2011.

\bibitem[BZ24]{homogES}
Martin Bays and Tingxiang Zou.
\newblock An asymmetric version of {E}lekes-{S}zab{\'{o}} via group actions.
\newblock 2024.
\newblock arxiv:2408.14215.

\bibitem[ER00]{ER}
Gy\"{o}rgy Elekes and Lajos R\'{o}nyai.
\newblock A combinatorial problem on polynomials and rational functions.
\newblock {\em J. Combin. Theory Ser. A}, 89(1):1--20, 2000.

\bibitem[ES12]{ES-groups}
Gy\"orgy Elekes and Endre Szab\'o.
\newblock How to find groups? (and how to use them in {E}rd\"os geometry?).
\newblock {\em Combinatorica}, 32(5):537--571, 2012.

\bibitem[HPP08]{HPP}
Ehud Hrushovski, Ya'acov Peterzil, and Anand Pillay.
\newblock Groups, measures, and the {NIP}.
\newblock {\em J. Amer. Math. Soc.}, 21(2):563--596, 2008.

\bibitem[Hru13]{Hr-psfDims}
Ehud Hrushovski.
\newblock On pseudo-finite dimensions.
\newblock {\em Notre Dame J. Form. Log.}, 54(3-4):463--495, 2013.

\bibitem[Jun42]{jung}
H.~E.~W. Jung.
\newblock {\"U}ber ganze birationale {Transformationen} der {Ebene}.
\newblock {\em J. Reine Angew. Math.}, 184:161--174, 1942.

\bibitem[Ser80]{Serre}
Jean-Pierre Serre.
\newblock {\em Trees. {Transl}. from the {French} by {John} {Stillwell}}.
\newblock Berlin-Heidelberg-New {York}: {Springer}-{Verlag}., 1980.

\end{thebibliography}
\end{document}